\documentclass{article}
\usepackage{natbib}
\usepackage{graphicx}
\usepackage{amsfonts}
\usepackage{amsmath}
\usepackage{arxiv}
\usepackage{url}
\usepackage{amsthm}
\usepackage{amssymb}

\usepackage[utf8]{inputenc} % allow utf-8 input
\usepackage[T1]{fontenc}    % use 8-bit T1 fonts
\usepackage{hyperref}       % hyperlinks
\usepackage{url}            % simple URL typesetting
\usepackage{booktabs}       % professional-quality tables
\usepackage{amsfonts}       % blackboard math symbols
\usepackage{nicefrac}       % compact symbols for 1/2, etc.
\usepackage{microtype}      % microtypography
\usepackage{lipsum}
\usepackage{graphicx}
\graphicspath{ {./images/} } % Anonymized submission

% The following packages will be automatically loaded:
% amsmath, amssymb, natbib, graphicx, url, algorithm2e

\usepackage{times}
\usepackage{hyperref}
\usepackage[utf8]{inputenc} % allow utf-8 input
\usepackage{hyperref}       % hyperlinks
\usepackage{url}            % simple URL typesetting
\usepackage{booktabs}       % professional-quality tables
\usepackage{amsfonts}       % blackboard math symbols
\usepackage{nicefrac}       % compact symbols for 1/2, 
\usepackage{microtype}      % microtypography
\usepackage{lipsum}
\usepackage{dirtytalk}
\usepackage{mathrsfs}
\usepackage{mathtools}
%\usepackage[usenames,dvipsnames]{xcolor}
%\usepackage{tikz}
%\usetikzlibrary{positioning}
\usepackage{transparent}
\usepackage{times}
\usepackage{graphicx} % more modern
\usepackage{wrapfig}
\usepackage{relsize,exscale}
\usepackage{nameref}
% Attempt to make hyperref and algorithmic work together better:

%\renewcommand{\algorithmicrequire}{\textbf{Input:}}
%\renewcommand{\algorithmicensure}{\textbf{Output:}}

\newtheorem{defn}{Definition}
\newtheorem{thm}{Theorem}
\newtheorem{prop}{Proposition}
\newtheorem{rem}{Remark}

\newtheorem{ex}{Example}
\newtheorem{hyp}{Assumption}
\newcommand{\R}{\mathbb{R}}

\newcommand{\norm}[1]{{\left\lVert #1 \right\rVert}}

\makeatletter
\def\namedlabel#1#2{\begingroup
   \def\@currentlabel{#2}%
   \label{#1}\endgroup
}
\makeatother

% OPERATORS
\DeclareMathOperator*{\argmin}{arg\,min}

\def\P{\mathbb P}
\def\E{\mathbb E}

\def\N{\mathbb N}

\def\X{\mathcal X}

% Use \Name{Author Name} to specify the name.
% If the surname contains spaces, enclose the surname
% in braces, e.g. \Name{John {Smith Jones}} similarly
% if the name has a "von" part, e.g \Name{Jane {de Winter}}.
% If the first letter in the forenames is a diacritic
% enclose the diacritic in braces, e.g. \Name{{\'E}louise Smith}

% Two authors with the same address
% \coltauthor{\Name{Author Name1} \Email{abc@sample.com}\and
%  \Name{Author Name2} \Email{xyz@sample.com}\\
%  \addr Address}

% Three or more authors with the same address:
% \coltauthor{\Name{Author Name1} \Email{an1@sample.com}\\
%  \Name{Author Name2} \Email{an2@sample.com}\\
%  \Name{Author Name3} \Email{an3@sample.com}\\
%  \addr Address}
\title{A Consistent Extension of Discrete Optimal Transport Maps\\ for Machine Learning Applications}

\date{\vspace{-5ex}}
\author{Lucas De Lara$^{(1)}$\footnotemark[1] \ \ and  \ Alberto Gonz\'alez-Sanz$^{(2)}$\footnotemark[1] \ \ and \ Jean-Michel Loubes $^{(3)}$\thanks{Research partially supported by the AI Interdisciplinary Institute ANITI, which is funded by the French “Investing for the Future – PIA3” program under the Grant agreement ANR-19-PI3A-0004.}\\  $\,$ \\ 
IMT, Universit\'e de Toulouse III
France\\ $\,$ \\ 
$^{(1)}$lucas.de\_lara@math.univ-toulouse.fr \quad $^{(2)}$alberto.gonzalez sanz@math.univ-toulouse.fr \\ $^{(3)}$loubes@math.univ-toulouse.fr}

\begin{document}

\maketitle

\begin{abstract}%
Optimal transport maps define a one-to-one correspondence between probability distributions, and as such have grown popular for machine learning applications. However, these maps are generally defined on empirical observations and cannot be generalized to new samples while preserving asymptotic properties. We extend a novel method to learn a consistent estimator of a continuous optimal transport map from two empirical distributions. The consequences of this work are two-fold: first, it enables to extend the transport plan to new observations without computing again the discrete optimal transport map; second, it provides statistical guarantees to machine learning applications of optimal transport. We illustrate the strength of this approach by deriving a consistent framework for transport-based counterfactual explanations in fairness.
\end{abstract}

\textbf{Keywords:} Optimal Transport, Counterfactuals, Explanability, Fairness.

\section{Introduction}

Over the last past years, Optimal Transport (OT) methods have grown popular for machine learning applications. Signal analysis \citep{kolouri2017optimal}, domain adaptation \citep{7586038}, transfer learning \citep{gayraud2017optimal} or fairness in machine learning \citep{jiang2020wasserstein,pmlr-v97-gordaliza19a} for instance have proposed new methods that make use of optimal transport maps. Given two distributions $\mu$ and $\nu$ satisfying some assumptions, such a map $T$ has the property of \emph{pushing forward} a measure to another in the sense that if a random variable $X$ follows the distribution $\mu$, then its image $T(X)$ follows the distribution $\nu$. This map comes as a tool to transform the distribution of observations.

However, since only empirical distributions are observed, the continuous optimal transport  is transformed into an empirical problem. Optimal transport for empirical distributions has been widely studied from both a theoretical and a computational point of view. We refer for instance to~\cite{peyre2019computational} and references therein. The obtained empirical maps between observations suffer some important drawbacks when implementing machine learning methods relying on OT. As they are one-to-one correspondences between the points used to compute the optimal transport, they are only defined on these observations, preventing their use for new inputs.

To cope with this issue, either the map must be recomputed for each new data set or one must use a continuous approximation extending the empirical map to observations out of the support of the empirical distribution. Previous research on the latter topic includes GAN approximations of the OT map \citep{black2020fliptest} and Monte-Carlo approximations of the dual parameters \citep{chiappa2021fairness}. However, these methods don't provide consistent estimators, in the sense that the obtained transport plans are not asymptotically close to the continuous OT map as the sample size increases.
%  In a one-to-one correspondence between observations with respective distribution $\mu$ and $\nu$, such algorithms  enable to compute empirical transports maps between the observations.

In this paper, we propose to fill the gap between continuous and empirical transport by considering a statistically consistent interpolation of the OT map for discrete measures. On the basis of the interpolation provided in \cite{delbarrio2020centeroutward}, we generalize their results and prove that it is possible to learn from empirical observations a suitable OT map for machine learning methods. We then utilize this interpolation to derive the first consistent framework for empirically-based counterfactual explanations to audit the fairness of binary classifiers, extending the work in~\cite{black2020fliptest}.

\section{Learning an OT map for new observations}\label{learning}

Let $\mu_0$ and $\mu_1$ be two unknown probability measures on $\R^d$ whose respective supports are denoted by $\X_0$ and $\X_1$. In this section, we address the problem of learning the optimal transport map between $\mu_0$ and $\mu_1$ from data points.

\subsection{Background in Optimal Transport}

Let $\norm{\cdot}$ denote the euclidean norm associated with the scalar product $\langle \cdot,\cdot \rangle$. The \emph{optimal transport map} between $\mu_0$ and $\mu_1$ with respect to the squared euclidean cost is defined as the solution to the following \emph{Monge problem}:
\begin{equation}\label{monge}
    \min_{T:\ T_\sharp \mu_0 = \mu_1} \int_{\R^d} \norm{x-T(x)}^2 d\mu_0(x), 
\end{equation}
where $T_\sharp \mu_0 = \mu_1$ denotes that $T$ \emph{pushes forward} $\mu_0$ to $\mu_1$, namely $\mu_1(B) := \mu_0 (T^{-1}(B))$ for any measurable set $B\subset \R^d$. Suppose that $\mu_0$ is absolutely continuous with respect to the Lebesgue measure $\ell_d$ in $\R^d$, and that both $\mu_0$ and $\mu_1$  have finite second order moments. Theorem~2.12 in \cite{villani2003topics} states that there exists an unique solution to \eqref{monge} $T : \X_0 \to \R^d$ called the \emph{Brenier map}. This map coincides $\mu_0$-almost surely with the gradient of a convex function, and in consequence has a \emph{cyclically monotone} graph. Recall that set $S \subset \R^d \times \R^d$ is cyclically monotone if any finite set $\{ (x_k,y_k)\}_{k=1}^N\subset S$ satisfies 
\begin{align*}
   \sum_{k=1}^{N-1} \langle y_k,x_{k+1}-x_{k} \rangle +\langle y_N,x_1-x_N \rangle \leq 0.
\end{align*}
 Such a set is contained in the graph of the \emph{subdifferential} of a convex function, see  \citep{rockafellar1970convex}. The subdifferential at a point $x \in \R^d$ of a convex function $\psi$ is defined as the set

\begin{equation*}\label{subdiff}
    \partial \psi(x) := \{ y \in \R^d |\forall z \in \R^d, \psi(z) - \psi(x) \geq \langle y, z-x \rangle \}.
\end{equation*}
We say that a \emph{multivalued map} $F:\R^d\rightarrow 2^{\R^d}$ is cyclically monotone if its graph is.

In a practical setting, we only have access to samples from $\mu_0$ and $\mu_1$, and consequently we can't solve \eqref{monge}. However, we can compute a discrete optimal transport map between the empirical measures. Consider two $n$-samples $\{x^0_1, \dots, x^0_n\}$ and  $\{x^1_1, \dots, x^1_n\}$ respectively drawn from $\mu_0$ and $\mu_1$. They define the empirical measures 
\begin{align*}
   \mu^n_{0} :=\frac{1}{n}\sum_{k=1}^{n}\delta_{x^0_k} \text{\ \ and \ \ } \mu^n_{1} :=\frac{1}{n}\sum_{k=1}^{n}\delta_{x^1_k}.
\end{align*}

The \emph{discrete} Monge problem between $\mu^n_{0}$ and $\mu^n_{1}$ is
\begin{align}\label{discrete_ot}
  \min_{T_n\in \mathcal{T}_n}\frac{1}{n}\sum_{k=1}^{n}||{x^0_k}-T_n(x^0_k)||^2,
\end{align}
where $\mathcal{T}_n$ denotes the set of bijections from $\{x^0_i\}^n_{i=1}$ to $\{x^1_i\}^n_{i=1}$. Problem \eqref{discrete_ot} defines an unique solution $T_n$ referred as the \emph{discrete} optimal transport map between the two samples. This solution is such that $\big\{(x^0_k, T_n(x^0_k)) \big\}_{k=1}^n$ is {cyclically monotone}.

In this paper, we focus on the problem of estimating the optimal transport map $T$ solving \eqref{monge}. As mentioned in the introduction, the solution $T_n$ to $\eqref{discrete_ot}$ is not a suitable estimator because it has finite input and output spaces, whereas $T$ maps the whole domains. As a consequence, the empirical map cannot generalize to new observations. This limitation triggered the need for regularized approaches: a topic we explore next.

%Finally, thanks to a celebrated theorem from \cite{mccann1995}, under the sole assumption that $\mu_0$ is absolutely continuous with respect to the Lebesgue measure, there exists only one gradient of a convex function $\nabla \psi$ satisfying the push-forward condition $\nabla \psi_\sharp P = Q$. This facilitates the search for the Brenier map solving \eqref{monge}, as it suffices to find a cyclically monotone map satisfying the push-forward condition.

\subsection{Smooth Interpolation}\label{interpolation}

%Turning back to the empirical case, the optimal matching problem can be solved exactly in many different ways. The fastest one is the well known Hungarian method, see [FIXME citar Fast and robust earth mover’s distances], which has convergence order of $O(n^3)$. \textcolor{red}{Add the problem}

The heuristic approximation of a continuous OT map proposed in \cite{black2020fliptest} handles new observations and has a satisfying empirical behaviour, but is not guaranteed to converge to the true OT map as the sample size increases. The problem of constructing an approximation able to generalize to new observations while being statistically consistent crucially raises the question of which properties of continuous optimal transport must be preserved by the empirical estimator.

Recall that in one dimension the continuous optimal transport map between two probability measures is a non-decreasing function $T$ such that $T_\sharp \mu_0= \mu_1$. Then, natural extensions and regularization are made by preserving that property. In several dimensions, the cyclically monotone property substitutes the non-decreasing one. For the purpose of generalizing the notion of distribution function to higher dimensions, \cite{delbarrio2020centeroutward} designed such an extension of ${T}_n$ that converges to $T$ as the sample size increases. We briefly present the construction hereafter, and refer to \cite{delbarrio2020centeroutward} for further details.

The idea is to extend the discrete map $T_n : \{x^0_i\}^n_{i=1} \to \{x^1_i\}^n_{i=1}$ to a continuous one $\overline{T}_n : \R^d \to \R^d$ by regularizing a piece-wise constant approximation of $T_n$. The first step consists in solving \eqref{discrete_ot} and permuting the observations so that for every $i \in \{1,\ldots,n\}$, $T_n(x^0_i)=x^1_i$. Once the samples are aligned, we look for the parameters $\varepsilon_0$ and $\psi \in \R^n$ defined as the solutions to the linear program 

\begin{equation}\label{primal}
 \begin{split}
    \max_{\psi \in \R^n, \varepsilon_0 \in \R} &\varepsilon_0\\
    \text{ s.t. } \langle x^0_i, x^1_i - x^1_j \rangle \geq \psi_i &- \psi_j + 2 \varepsilon_0,\ i \neq j.
 \end{split}
\end{equation}

Recall that $\big\{(x^0_i,x^1_i)\big\}^n_{i=1}$ is cyclically monotone, and consequently is contained in the graph of the subdifferential of some convex function. Since this is a finite set, there exist several convex functions satisfying this property. For any of them denoted by $\overline{\varphi}_n$, its convex conjugate $\overline{\varphi}^*_n := \sup_{z \in \R^d} \{ \langle z, \cdot \rangle - \overline{\varphi}_n(z)\}$ is such that

$$
    \overline{\varphi}_n^*(x^1_i) - \overline{\varphi}^*_n(x^1_j) \leq \langle x^0_i,x^1_i - x^1_j \rangle.
$$

The idea behind \eqref{primal} is to find the most regular candidate convex function $\overline{\varphi}_n$ by maximizing the strict convexity of $\overline{\varphi}^*_n$. Proposition 3.1 in \cite{delbarrio2020centeroutward} implies that \eqref{primal} is feasible. In practice, we solve \eqref{primal} by applying Karp's algorithm \citep{karp1978characterization} on its dual formulation:

\begin{equation}\label{dual}
 \begin{split}
    \min_{z_{i,j} : i \neq j} &\sum_{i,j : i \neq j} z_{i,j} \langle x^0_i,x^1_i \rangle\\
    \text{ s.t. } &\sum_{j : j \neq i} (z_{i,j}-z_{j,i})=0,\\ &\sum_{i,j : i \neq j} z_{i,j}=1,\ z_{i,j} \geq 0,\ i,j=1,\ldots,n.
 \end{split}
\end{equation}

Next, define the following convex function

\begin{equation} \label{extended}
    \tilde{\varphi}_n(x) := \max_{1 \leq i \leq n} \big\{ \langle x , x^0_i \rangle - \psi_i \big\}.
\end{equation}

Note that $\nabla \tilde{\varphi}_n$, wherever it is well-defined, is a piece-wise constant interpolation of $T_n$. To obtain a regular interpolation defined everywhere and preserving the cyclical monotonicity we consider the Moreau-Yosida regularization of $\tilde{\varphi}_n$ given by

$$
    \varphi_n(x) := \inf_{z \in \R^d} \big\{ \tilde{\varphi}_n(z) + \frac{1}{2 \varepsilon_0} ||z-x||^2 \big\}.
$$

Such a regularization is differentiable everywhere. Then, the mapping from $\R^d$ to $\R^d$ defined as $\overline{T}_n := \nabla \varphi_n$ satisfies the following properties:

\begin{enumerate}
    \item $\overline{T}_n$ is continuous,
    \item $\overline{T}_n$ is cyclically monotone,
    \item for all ${i \in \{1,\ldots,n\}},\ \overline{T}_n(x^0_i) = x^1_i = T_n(x^0_i)$,
    \item for all ${x \in \R^d}$, $\overline{T}_n(x) $ belongs to the convex hull of $ \{x^1_1,\ldots,x^1_n\} $.
\end{enumerate}

A more explicit expression of $\overline{T}_n$ can be derived using the gradient formula of Moreau-Yosida regularizations. For $g : \R^d \to \R \cup \{+\infty\}$ a proper convex lower-semicontinuous function, the proximal operator of $g$ is defined on $\R^d$ by

\begin{equation}\label{prox}
    \text{prox}_g(x) := \argmin_{z \in \R^d} \big\{ g(z) + \frac{1}{2} ||z-x||^2 \big\}.
\end{equation}

Note that it is well-defined since the minimized function is strictly convex. Then, according to Theorem 2.26 in \cite{rockafellar2009variational} we have

\begin{align}\label{formula}
    \overline{T}_n(x) = \frac{1}{\varepsilon_0} \big( x-\text{prox}_{\varepsilon_0 \tilde{\varphi_n}}(x) \big).
\end{align}

The interpolation of each new input $x$ is numerically computed by solving the optimization problem $\text{prox}_{\varepsilon_0 \tilde{\varphi_n}}(x)$. As a consequence, generalizing with $\overline{T}_n$ is not computationally free as we must compute proximal operators. Let's benchmark this approach against classical discrete OT.

Suppose for instance that after constructing $T_n$ on $\{x^0_i\}^n_{i=1}$ and $\{x^1_i\}^n_{i=1}$ we must generalize the OT map on a new sample $\{\bar{x}^0_i\}^m_{i=1} \sim \mu_0$ such that $m \leq n$. Without additional observations from $\mu_1$, we are limited to: computing for each $\bar{x}^0_i$ its closest counterpart in $\{x^1_j\}^n_{j=1}$---which would deviate from optimal transport; computing the OT map between $\{\bar{x}^0_i\}^m_{i=1}$ and an $m$-subsample of $\{x^1_j\}^n_{j=1}$---which would be greedy. With an additional sample $\{\bar{x}^1_i\}^m_{i=1}$ from $\mu_1$, we could upgrade $T_n$ to a $T_{n+m}$ by recomputing the empirical OT map between the $(n+m)$-samples. However, this would cost $\mathcal{O}\big((n+m)^3\big)$ in computer time, require new observations, and not be a natural extension of $T_n$. On the other hand, building the interpolation $\overline{T}_n$ with Karp's algorithm has a running-time complexity of $\mathcal{O}(n^3)$: the same order as for $T_n$. Then, to generalize the transport to $\{\bar{x}^0_i\}^m_{i=1}$ with $\overline{T}_n$, we must solve $m$ optimization problems, one for each $\text{prox}_{\varepsilon_0 \tilde{\varphi_n}}(\bar{x}^0_i)$. As this amount to minimizing a function which is Lipschitz with constant $\max_{1 \leq i \leq n} \norm{x^1_i} + \varepsilon^{-1}_0$ and strongly convex with constant $\varepsilon^{-1}_0$, an $\epsilon$-optimal solution can be obtained in $\mathcal{O}(\epsilon^{-1})$ steps with a subgradient descent \citep{bubeck2017convex}. Since evaluating $\partial \tilde{\varphi}_n$ at each step of the descent costs $n$ operations, computing the transport interpolation with precision $\epsilon$ of an $m$-sample has a computational complexity of order $\mathcal{O}(mn \epsilon^{-1})$. Note also that this methods is hyper-parameter free, and as such is more convenient than prior regularized approaches. In addition, the obtained map is a statistically relevant estimator: we show hereafter that the theoretical interpolation \eqref{formula} converges to the continuous OT map under mild assumptions.

\subsection{Consistency of the estimator}

We provide an extension of Proposition 3.3 in \cite{delbarrio2020centeroutward}. While the original result ensures the convergence of the interpolation $\overline{T}_n$ to $T$ in the case where $\mu_1$ is the spherical uniform law over the $d$-dimensional unit open ball, we prove that the consistency holds in more general settings.

%where the output distribution was the spherical uniform law over the $d$-dimensional open ball. 

\begin{thm}\label{convergences}

Let $\mathring{\X_0}$ and $\mathring{\X_1}$ be the respective interiors of $\X_0$ and $\X_1$, and $T$ the optimal transport map between $\mu_0$ and $\mu_1$. The following hold:

\begin{enumerate}
    \item Assume that $\mathcal{X}_0$ is convex such that $\mu_0$ has positive density on its interior. Then, for $\mu_0$-almost every $x$,

$$
    \overline{T}_n(x) \xrightarrow[n \to \infty]{a.s.} T(x).
$$

    \item Additionally assume that $T$ is continuous on $\mathring{\X_0}$, and that $\X_1$ is compact. Then, for any compact set $C$ of $\R^d$,

$$
    \sup_{x \in C}||\overline{T}_n(x) - T(x)|| \xrightarrow[n \to \infty]{a.s.} 0.
$$

In particular, provided that $\X_0$ is compact, the convergence is uniform on the support.

    \item Further assume that $\X_1$ is a strictly convex set, then

$$
    \sup_{x \in \R^d}||\overline{T}_n(x) - T(x)|| \xrightarrow[n \to \infty]{a.s.} 0.
$$
\end{enumerate}

\end{thm}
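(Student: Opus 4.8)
The plan is to combine the variational structure of the interpolation with the classical stability of optimal transport under weak convergence. We argue on the almost-sure event on which, by Varadarajan's theorem and since $\mu_0,\mu_1$ have finite second moments, $\mu^n_0 \rightharpoonup \mu_0$ and $\mu^n_1 \rightharpoonup \mu_1$ weakly (and in $2$-Wasserstein distance), and on which the construction of Section~\ref{interpolation} is well-defined for every $n$ (feasibility of \eqref{primal}, Proposition~3.1 of \cite{delbarrio2020centeroutward}). Write $\overline{T}_n = \nabla \varphi_n$ with $\varphi_n$ convex, and normalize $\varphi_n$ so that $\varphi_n(x^\star)=0$ for a fixed $x^\star \in \mathring{\mathcal{X}_0}$; this affects neither $\overline{T}_n$ nor the statement.

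For part~(1), the key claim is that along every subsequence one can extract a further subsequence such that $\varphi_n \to \varphi$ locally uniformly on $\mathring{\mathcal{X}_0}$, with $\nabla \varphi = T$ $\mu_0$-almost everywhere. To produce the subsequential limit I would use a compactness argument for convex functions as in \cite{delbarrio2020centeroutward}: property~(4) gives $\overline{T}_n(x) \in \mathrm{Conv}\{x^1_1,\dots,x^1_n\}$, which together with the $2$-Wasserstein convergence of the marginals and the absolute continuity of $\mu_0$ yields equi-Lipschitz bounds (equivalently, local equi-boundedness of $\{\nabla\varphi_n\}$) on compact subsets of $\mathring{\mathcal{X}_0}$; Arzel\`a--Ascoli and the normalization then produce $\varphi$. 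To identify $\varphi$, consider the couplings $\pi_n := (\mathrm{id},\overline{T}_n)_\sharp \mu^n_0 = \frac1n \sum_{k=1}^n \delta_{(x^0_k,x^1_k)}$, each concentrated on $\mathrm{graph}(\partial\varphi_n)$. Their marginals converge, so $\{\pi_n\}$ is tight and a further subsequence converges weakly to a coupling $\pi$ of $\mu_0$ and $\mu_1$. Since $\varphi_n \to \varphi$ locally uniformly implies graphical convergence of $\partial\varphi_n$ to $\partial\varphi$ (Attouch's theorem, see \cite{rockafellar2009variational}), the limit $\pi$ is concentrated on $\mathrm{graph}(\partial\varphi)$, which is cyclically monotone; hence $\pi$ is an optimal coupling of $\mu_0$ and $\mu_1$, and as $\mu_0$ is absolutely continuous, Brenier's theorem (Theorem~2.12 of \cite{villani2003topics}) forces $\pi = (\mathrm{id},T)_\sharp \mu_0$. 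Therefore $T(x) \in \partial\varphi(x)$ for $\mu_0$-a.e.\ $x$, and since $\varphi$ is differentiable Lebesgue- (hence $\mu_0$-) a.e., $\nabla\varphi = T$ $\mu_0$-a.e. Because $\mathcal{X}_0$ is convex, $\mathring{\mathcal{X}_0}$ is connected and $\mu_0$ has positive density there, so the Brenier potential is unique up to an additive constant on $\mathring{\mathcal{X}_0}$; with the normalization this determines $\varphi$ on $\mathring{\mathcal{X}_0}$. Hence all subsequential limits agree on $\mathring{\mathcal{X}_0}$, the full sequence $\varphi_n$ converges there locally uniformly, and $\overline{T}_n = \nabla\varphi_n \to \nabla\varphi = T$ at every differentiability point of $\varphi$, i.e.\ for $\mu_0$-a.e.\ $x$.

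For part~(2), compactness of $\mathcal{X}_1$ puts all the ranges in the fixed compact convex set $K := \overline{\mathrm{Conv}}(\mathcal{X}_1)$, so $\{\overline{T}_n\}$ is uniformly bounded, $\{\varphi_n\}$ is equi-Lipschitz on $\R^d$, and the convergence $\varphi_n \to \varphi$ is locally uniform on all of $\R^d$. Continuity of $T$ on $\mathring{\mathcal{X}_0}$ means $\varphi$ is $C^1$ there, and the standard fact that locally uniform convergence of convex functions toward a differentiable limit upgrades to locally uniform convergence of the gradients on the open region of differentiability \citep{rockafellar1970convex} gives $\overline{T}_n \to T$ uniformly on compact subsets of the region where $T$ is continuous; continuity of $T$ and of each $\overline{T}_n$ extends this to the compacts in the statement, and in particular to $\mathcal{X}_0$ when it is compact. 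For part~(3), I would exploit that strict convexity of $\mathcal{X}_1$ (equivalently of $K$) makes the support function of $K$ differentiable off the origin, so that for each unit direction $u$ both $\nabla\varphi_n(tu)$ and $\nabla\varphi(tu)$ converge as $t\to\infty$ to the unique exposed point of $K$ in the direction $u$, with a modulus uniform in $n$; this supplies the control of $\overline{T}_n - T$ near $\partial\mathcal{X}_0$ and at infinity that is absent in general, reduces the supremum over $\R^d$ to a supremum over a large ball, and lets one conclude from part~(2).

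The main obstacle is the identification step of part~(1): establishing the a priori compactness of $\{\varphi_n\}$ on $\mathring{\mathcal{X}_0}$ without a global bound on the ranges (this is precisely where convexity of $\mathcal{X}_0$, positivity of the density, and the $2$-Wasserstein convergence of the empirical measures enter, adapting the estimates of \cite{delbarrio2020centeroutward}), followed by the passage to the limit in the couplings $\pi_n$ and the use of Brenier uniqueness to pin down the potential. By comparison the upgrades in parts~(2) and~(3) are soft, the only delicate points being the behaviour near $\partial\mathcal{X}_0$ in~(2) and the uniformity at infinity in~(3), both tamed by the compactness, respectively strict convexity, of $\mathcal{X}_1$.
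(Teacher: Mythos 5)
Your proposal for part~(1) takes a genuinely different route from the paper's. Where the paper invokes Theorem~2.8 of \cite{del2019central} to pass directly from the stability of the couplings $(I\times T_n)_\sharp\mu^n_0\to(I\times T)_\sharp\mu_0$ (Theorem~5.19 of \cite{villani2008optimal}) to the pointwise convergence of the centered potentials on $\mathring{\mathcal{X}_0}$, you re-derive that convergence by extracting Arzel\`a--Ascoli limits of $\{\varphi_n\}$, passing the couplings $\pi_n$ to the limit, and identifying the limit potential via cyclical monotonicity and Brenier uniqueness. This is a legitimate and fairly standard stability argument, but it hinges on a local equi-boundedness estimate for $\{\nabla\varphi_n\}$ on compact subsets of $\mathring{\mathcal{X}_0}$ that you acknowledge as ``the main obstacle'' without supplying: in part~(1) the target support $\mathcal{X}_1$ is not assumed compact, so property~(4) only bounds $\overline{T}_n$ by $\max_i\|x^1_i\|$, which diverges almost surely. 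That local gradient bound (for which one really needs the positivity of the density on $\mathring{\mathcal{X}_0}$ and cyclical monotonicity, precisely the content of the cited del Barrio--Gonz\'alez-Sanz--Hallin--Loubes estimates) is exactly what Theorem~2.8 of \cite{del2019central} packages, so here the paper's citation is doing real work that your sketch does not replace.

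In part~(2) there is a genuine gap. You claim that, with $\mathcal{X}_1$ compact, equi-Lipschitz potentials plus the pointwise convergence from part~(1) already give $\varphi_n\to\varphi$ locally uniformly on \emph{all} of $\R^d$. But part~(1) only gives convergence on $\mathring{\mathcal{X}_0}$, which need not be dense in $\R^d$ (indeed, the theorem explicitly covers the case where $\mathcal{X}_0$ is compact). Equicontinuity plus convergence on a non-dense set does not determine the limit off the closure of that set, so any Arzel\`a--Ascoli cluster point $\rho$ is a priori only pinned down on $\mathcal{X}_0$. The paper devotes the bulk of Step~2 to exactly this point: it proves $\rho\geq\varphi$ via the Fenchel--Young inequality and the density of $\nabla\varphi(\mathring{\mathcal{X}_0})$ in $\mathcal{X}_1$, and $\rho\leq\varphi$ by comparing with the Legendre transforms $\tilde\varphi_n$ of the restricted conjugates and invoking epi-convergence of conjugates (Theorems~7.17 and~11.34 of \cite{rockafellar2009variational}). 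Without some such argument one cannot even identify the limit map $T=\nabla\varphi$ on $\R^d\setminus\mathcal{X}_0$, which the statement requires (it is precisely the lower-semicontinuous extension \eqref{extension}). This step cannot be dismissed as ``soft''.

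Part~(3) has the same flavour of asserting the conclusion rather than proving it. Your key sentence --- that $\nabla\varphi_n(tu)$ and $\nabla\varphi(tu)$ both converge as $t\to\infty$ to the unique exposed point of $K$ in direction $u$ ``with a modulus uniform in $n$'' --- is precisely the uniform-at-infinity control that part~(3) claims, and nothing in your sketch establishes that uniformity in $n$ (nor in the direction $u$). Note also that for each fixed $n$, $\nabla\varphi_n(tu)$ tends to an exposed point of $\mathrm{Conv}\{x^1_1,\dots,x^1_n\}$, which lies in $\mathcal{X}_1$ but is generally far from the exposed point of $K$ in direction $u$ for finite $n$; only the combined limit $n\to\infty$, $t\to\infty$ gives the claim, which is exactly what needs to be controlled. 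The paper's argument is structurally different and actually closes this: it argues by contradiction, extracts a diverging sequence $x_n=\lambda_nu_n$, uses the auxiliary Proposition~\ref{limit} to locate the cluster points $z$ of $\overline{T}_n(x_n)$ and $y$ of $T(x_n)$ on $\partial\mathcal{X}_1$ with $u\in N_{\mathcal{X}_1}(y)$, and then a monotonicity inequality together with strict convexity forces $z=y$. You should produce an argument of that type (or an explicitly uniform estimate) rather than asserting the modulus.
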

The proof falls naturally into three parts, each one dedicated to the different points of Theorem~\ref{convergences}. The first point is a consequence of Theorem 2.8 in \cite{del2019central} and Theorem 25.7 in \cite{rockafellar1970convex}, which entail that the convergence of $\{\varphi_n\}_{n \in \N}$ to $\varphi$ extends to their gradients. The proofs of the second and third points follow the guidelines of the one in \cite{delbarrio2020centeroutward}. The idea is to replace the unit ball by a compact set, and then a stricly convex set. We refer to the appendix for a complete description of this proof, as well as for all the other theoretical claims introduced in this paper.

\begin{rem}

We briefly discuss the assumptions of Theorem \ref{convergences}. Thanks to a recent work \citep{delbarrio2021central}, the convexity of $\mathcal{X}_0$ can be relaxed to having a connected support with negligible boundary.

Note that the second and third points of this theorem require a continuous optimal transport map $T$ to ensure the uniform convergence of the estimator. Caffarelli's theory \citep{ caffarelli1990localization,caffarelli1991some,caffarelli1992regularity,figalli2017monge} provides sufficient conditions for this to hold. Suppose that $\mathcal{X}_0$ and $\mathcal{X}_1$ are compact convex, and that $\mu_0$ and $\mu_1$ respectively admit $f_0$ and $f_1$ as density functions. If there exist $\Lambda\geq\lambda>0$ such that for all  $x\in\mathcal{X}_0$, $y\in\mathcal{X}_1$
\begin{align*}
    \lambda\leq f_0(x),f^{-1}_0(x),f_1(y),f^{-1}_1(y)  \leq \Lambda,
\end{align*}
then $T$ is continuous. For the non compact cases some results can be found in \cite{figalli2010partial,del2020note,corderoerausquin2019regularity}. 
\end{rem}

\section{Applications}\label{applications}

In this section, we focus on the problem of repairing and auditing the bias of a trained binary classifier. Let $(\Omega, \mathcal{A}, \P)$ be a probability space. The random vector $X : \Omega \to \R^d$ represents the observed \emph{features}, while the random variable $S: \Omega \to \{0,1\}$ encodes the observed \emph{sensitive} or \emph{protected attribute} which divides the population into a supposedly \emph{disadvantaged} class $S=0$ and a \emph{default} class $S=1$. The random variable $S$ is supposed to be non-degenerated. The two measures $\mu_0$ and $\mu_1$ are respectively defined as $\mathcal{L}(X|S=0)$ and $\mathcal{L}(X|S=1)$. The predictor is defined as $\hat{Y} := h(X,S)$, where $h : \R^d \times \{0,1\} \to \{0,1\}$ is deterministic. We consider a setting in which $\hat{Y}=1$ and $\hat{Y}=0$ respectively represent a \textit{favorable} and a \textit{disadvantageous} outcome.

\subsection{Data processing for Fair learning using Optimal Transport}
The standard way to deal with Fairness in Machine Learning is to measure it by introducing fairness measures. Among them, the disparate impact (DI) has received particular attention to determine whether a binary decision does not discriminate a minority corresponding to $S=0$, see for instance in \cite{ZVGG}. This corresponds to the notion of statistical parity introduced in \cite{dwork2012fairness}. For a classifier $h$ with values in $\{0,1\}$, set $DI(h,X,S)$ as
\[ \frac{\min(\mathbb{P}(h(X,S)=1 \mid S=0),\mathbb{P}(h(X,S)=1 \mid S=1))}{\max(\mathbb{P}(h(X,S)=1 \mid S=1),\mathbb{P}(h(X,S)=1 \mid S=0))}.\]
This criterion is close to 1 when statistical parity is ensured while the smaller the disparate, the more discrimination for the minority group. Obtaining fair predictors can be achieved by several means, one consisting in pre-processing the data by modifying the distribution of the inputs. Originally inspired by \cite{FFMSV}, this method proved in \cite{pmlr-v97-gordaliza19a}  consists in removing from the data the dependency with respect to the sensitive variable. This can be achieved by constructing two optimal transport maps, $T_0$ and $T_1$, satisfying ${T_0}_\sharp \mu_0 = \mu_B$ and ${T_1}_\sharp \mu_1 = \mu_B$, where $\mu_B$ is the Wasserstein barycenter of $\mu_0$ and $\mu_1$. The algorithm is then trained on the dataset of the modified observations following the distribution of the barycenter, which guarantees that $h \big(T_S(X),S\big) $ satisfies the statistical parity fairness criterion.

Using the estimator we propose in this work enables to compute for any new observation $(x,s)$ a prediction $h(T_{n,s}(x),s)$  with  theoretical guarantees. Note that the same framework applies when considering post-processing of outcome of estimators (or scores of classifiers) which are then pushed towards a fair representer. We refer for instance to \cite{le2020projection} for the regression case or to \cite{chiappa2020general} for the classification case.

\subsection{Consistency of Counterfactual Explanations}

A sharper approach to fairness is to \emph{explain} the discriminatory behaviour of the predictor. \cite{black2020fliptest} started laying out the foundations of auditing binary decision-rules with transport-based mappings. Their \emph{Flip Test} is an auditing technique for uncovering discrimination against protected groups in black-box classifiers. It is based on two types of objects: the \emph{Flip Sets}, which list instances whose output changes \emph{had they belonged to the other group}, and the \textit{Transparency Reports}, which rank the features that are associated with a disparate treatment across groups. Crucially, building these objects requires assessing \emph{counterfactuals}, statements on potential outcomes \emph{had a certain event occurred} \citep{lewis1973}. The machine learning community mostly focused on two divergent frameworks for computing counterfactuals: the \emph{nearest counterfactual instances} principle, which models transformations as minimal translations \citep{wachter2017counterfactual}, and Pearl's causal reasoning, which designs alternative states of things through surgeries on a causal model \citep{pearl2016causal}. While the former implicitly assumes that the covariates are independent, hence fails to provide faithful explanations, the latter requires a \emph{fully specified} causal model, which is a very strong assumption in practice. To address these shortcomings, \cite{black2020fliptest} proposed substituting causal reasoning by matching the two groups with a one-to-one mapping $T : \R^d \to \R^d$, for instance an optimal transport map. However, because the GAN approximation they use for the OT map does not come with convergence guarantees, their framework for explanability fails to be statistically consistent. We fix this issue next. More precisely, after presenting this framework, we show that natural estimators of an optimal transport map, such as the interpolation introduced in Section \ref{interpolation}, lead to consistent explanations as the sample size increases.

\subsubsection{Definitions}

In contrast to \cite{black2020fliptest}, we present the framework from a non-empirical viewpoint. The following definitions depend on the choice of the binary classifier $h$ and the mapping $T$.

\begin{defn}
For a given binary classifier $h$, and a measurable function $T : \R^d \rightarrow \R^d$, we define
    
\begin{itemize}
    \item the FlipSet as the set of individuals whose $T$-counterparts are treated unequally
    $$
        F(h,T) = \{ x \in \R^d \ |\ h(x,0) \neq h(T(x),1) \},
    $$
    \item the positive FlipSet as the set of individuals whose $T$-counterparts are disadvantaged
    $$
        F^+(h,T) = \{ x \in \R^d \ |\ h(x,0) > h(T(x),1) \},
    $$
    \item the negative FlipSet as the set of individuals whose $T$-counterparts are advantaged
    $$
        F^-(h,T) = \{ x \in \R^d \ |\ h(x,0) < h(T(x),1) \}.
    $$
\end{itemize}

When there is no ambiguity, we may omit the dependence on $T$ and $h$ in the notation.

\end{defn}

The Flip Set characterizes a set of \emph{counterfactual explanations} w.r.t. an intervention $T$. Such explanations are meant to reveal a possible bias towards $S$. The partition into a positive and a negative Flip Set sharpens the analysis by controlling whether $S$ is an advantageous attribute or not in the decision making process. As $S=0$ represents the minority, one can think of the negative partition as the occurrences of \emph{negative discrimination}, and the positive partition as the occurrences of \emph{positive discrimination}. \cite{black2020fliptest} noted that the relative sizes of the empirical positive and negative Flip Sets quantified the lack of statistical parity. Following their proof, we give a generalization of their result to the continuous case:

\begin{prop}\label{gap}
Let $h$ be a binary classifier. If $T : \mathcal{X}_0 \to \mathcal{X}_1$ satisfies $T_\sharp \mu_0 = \mu_1$, then
\begin{align*}
    \P(h(X,S)=1|S=0) &- \P(h(X,S)=1|S=1)\\ &=\\ \P(X \in F^+|S=0) &- \P(X \in F^-|S=0).\\
\end{align*}
\end{prop}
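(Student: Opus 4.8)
The plan is to express both sides as probabilities over the conditional law $\mu_0$ and match them term by term. Since $S=0$ conditioning makes $X$ distributed as $\mu_0$, I would write $\P(h(X,S)=1\mid S=0) = \mu_0(\{x : h(x,0)=1\})$. For the other marginal, the key observation is that $T_\sharp \mu_0 = \mu_1$ together with $S=1$ conditioning gives $\P(h(X,S)=1\mid S=1) = \mu_1(\{y : h(y,1)=1\}) = \mu_0(\{x : h(T(x),1)=1\})$, where the last equality is precisely the change-of-variables/pushforward identity applied to the measurable set $\{y : h(y,1)=1\}$ (here I use that $h(\cdot,1)$ is measurable, so this set is measurable, and that $T$ is measurable so the preimage is in the domain of $\mu_0$). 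Thus the left-hand side becomes $\mu_0(A_0) - \mu_0(A_1)$ with $A_0 := \{x : h(x,0)=1\}$ and $A_1 := \{x : h(T(x),1)=1\}$.

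Next I would decompose this difference set-theoretically. Since $h$ takes values in $\{0,1\}$, we have the disjoint-union identities $A_0 = (A_0 \cap A_1) \sqcup (A_0 \setminus A_1)$ and $A_1 = (A_0 \cap A_1) \sqcup (A_1 \setminus A_0)$, so $\mu_0(A_0) - \mu_0(A_1) = \mu_0(A_0 \setminus A_1) - \mu_0(A_1 \setminus A_0)$. Now $A_0 \setminus A_1 = \{x : h(x,0)=1, h(T(x),1)=0\} = \{x : h(x,0) > h(T(x),1)\} = F^+$, and likewise $A_1 \setminus A_0 = \{x : h(x,0)=0, h(T(x),1)=1\} = \{x : h(x,0) < h(T(x),1)\} = F^-$. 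Finally, $\mu_0(F^+) = \P(X \in F^+ \mid S=0)$ and $\mu_0(F^-) = \P(X \in F^- \mid S=0)$ by the conditional-law definition of $\mu_0$, which yields the claimed equality.

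I do not anticipate a serious obstacle here; this is essentially a bookkeeping argument. The one point that needs a word of care is measurability: one must check that $F^+$ and $F^-$ are measurable so that $\mu_0$ can be evaluated on them, which follows since $h(\cdot,0)$ and $h(\cdot,1)$ are measurable (as $h$ is deterministic with finite range, each level set is measurable) and $T$ is measurable by hypothesis, so $A_1$ and hence $F^\pm$ are Borel. The other point worth stating explicitly is why $\P(h(X,S)=1\mid S=1)$ equals $\mu_1$ integrated against $h(\cdot,1)$ rather than something involving the joint law of $(X,S)$: this is simply that conditioning on $S=1$ replaces $\mathcal{L}(X)$ by $\mathcal{L}(X \mid S=1) = \mu_1$ and fixes the second argument of $h$ to $1$. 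With these remarks in place the proof is complete.
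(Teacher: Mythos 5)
Your proof is correct and is essentially the paper's own argument run in the opposite direction: the paper starts from the set identities $F^\pm = A_{0/1} \setminus (A_0 \cap A_1)$ and then applies the pushforward, while you first convert both sides to $\mu_0$-probabilities via the pushforward and then do the same set-theoretic cancellation. The bookkeeping, the role of $T_\sharp \mu_0 = \mu_1$, and the decomposition are identical; your added remarks on measurability are a harmless (and correct) extra.
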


However, the interest of such sets lies in their explanatory power rather than being proxies for determining fairness scores. By analyzing the mean behaviour of $I-T$ for points in a Flip Set, one can shed light on the features that mattered the most in the decision making process. A Transparency Report indicates which coordinates change the most, in intensity and in frequency, when applying $T$ to a Flip Set. In what follows, for any $x = (x_1,\ldots,x_d)^T \in \R^d$ we define $\text{sign}(x) := (\text{sign}(x_1),\ldots,\text{sign}(x_d))^T$ the sign function on vectors.

\begin{defn} Let $\star$ be in $\{-,+\}$, $h$ be a binary classifier and $T : \mathcal{X}_0 \to \R^d$ be measurable map. Assume that $\mu_0$ and $T_\sharp \mu_0$ have finite first-order moments. The Transparency Report is defined by the mean difference vector
\begin{align*}
    &\Delta^\star_{\text{diff}}(h,T) = \E_{\mu_0}[X-T(X)|X \in F^\star(h,T)]\\
    &= \frac{1}{\mu_0(F^\star(h,T))}\int_{F^\star(h,T)} \big(x-T(x)\big) d\mu_0(x),
\end{align*}
and the mean sign vector
\begin{align*}
    &\Delta^\star_{\text{sign}}(h,T) = \E_{\mu_0}\big[\text{sign}(X-T(X))\big|X \in F^\star(h,T)]\\
    &= \frac{1}{\mu_0(F^\star(h,T))}\int_{F^\star(h,T)} \text{sign}(x-T(x)) d\mu_0(x).
\end{align*}
\end{defn}

The first vector indicates how much the points moved; the second shows whether the direction of the transportation was consistent. We upgrade the notion of Transparency Report by introducing new objects extending the Flip Test framework.

\begin{defn} Let $\star$ be in $\{-,+\}$, and $T : \mathcal{X}_0 \to \R^d$ be measurable. Assume that $\mu_0$ and $T_\sharp \mu_0$ have finite first order moments. The difference Reference Vector is defined as
$$
    \Delta^\text{ref}_{\text{diff}}(T) := \E_{\mu_0}[X-T(X)]
    = \int \big(x-T(x)\big) d\mu_0(x).
$$

and the sign Reference Vector as
\begin{align*}
    &\Delta^\text{ref}_{\text{sign}}(T) := \E_{\mu_0}[\text{sign}\big(X-T(X)\big)]\\
    &= \int \text{sign}\big(x-T(x)\big) d\mu_0(x).
\end{align*}
\end{defn}

The auditing procedure can be summarized as follows: (1) compute the Flip Sets and evaluate the lack of statistical parity by comparing their respective sizes; (2) if the Flip Sets are unbalanced, compute the Transparency Report and the Reference Vectors; (3) identify possible sources of bias by looking at the largest components of $\Delta^\star_{\text{diff}}(h,T) - \Delta^\text{ref}_{\text{diff}}(T)$ and $\Delta^\star_{\text{sign}}(h,T) - \Delta^\text{ref}_{\text{sign}}(T)$. While the original approach would have directly analyzed the largest components of the Transparency Report, the aforementioned procedure scales the uncovered variations with a reference. This benchmark is essential. It contrasts the disparity between paired instances with different outcomes to the disparity between the protected groups; thereby, pointing out the actual treatment effect of the decision rule. We give an example to illustrate how the Reference Vectors act as a sanity check for settings where the Transparency Report fails to give explanations.

\begin{ex}\label{ex}

Let $g$ be the standard gaussian measure on $\R^2$, and define $\mu_0 := (-2,-1)^T + g$ and $\mu_1 := (2,1)^T + g$, so that $\delta := \E(\mu_0-\mu_1) = -(4,2)^T$. Set $T$ as the Brenier map between $\mu_0$ and $\mu_1$, and suppose that the decision rule is $h(x_1,x_2,s) := \mathbf{1}_{\{x_2>0\}}$. In this scenario, $T$ is the uniform translation $I - \delta$, and we have
\begin{align*}
    F^-(h,T) &= \{(x_1,x_2)^T \in \R^2 \ |\ -2 < x_2 < 0 \},\\
    F^+(h,T) &= \emptyset.\\
\end{align*}
Clearly, the predictor $h$ is unfair towards $\mu_0$, since the negative FlipSet outsizes the positive one. In this case, the vector $\Delta^-_{\text{diff}}(h,T)$ is simply equal to
\begin{align*}
    \Delta^-_{\text{diff}}(h,T) &= \frac{1}{\mu_0(F^-(h,T))} \int_{F^-(h,T)} \delta d\mu_0(x)\\ &= \delta = (-4,-2)^T.
\end{align*}

A misleading analysis would state that, because $|-4|>|-2|$, the Transparency Report has uncovered a potential bias towards the first coordinate. This would be inaccurate, since the classifier only takes into account the second variable. This issue comes from the fact that in this homogeneous case, the Transparency Report only reflects how the two conditional distributions differ, and does not give any insight on the decision rule. Our benchmark approach detects such shortcomings by systematically comparing the Transparency Report to the Reference Vectors. In this setting we have $\Delta^\text{ref}_{\text{diff}}(T) = \delta$, thus $\Delta^-_{\text{diff}}(h,T) - \Delta^\text{ref}_{\text{diff}}(T) = 0$, which means that the FlipTest does not give insight on the decision making process.

\end{ex}

To sum-up, we argue that it is the deviation of $\Delta^-_{\text{diff}}(h,T)$ from $\Delta^\text{ref}_{\text{diff}}(T)$, and not $\Delta^-_{\text{diff}}(h,T)$ alone, that brings to light the possible bias of the decision rule. Note that $T_\sharp \mu_0 = \mu_1$ entails $\Delta^\text{ref}_{\text{diff}}(T) = \E[X|S=0]-\E[X|S=1]$, which does not depend on $T$. Still, we define the Reference Vector with an arbitrary $T$ because in practice we operate with an estimator that only approximates the push-forward condition. %Note that, as mentioned in \cite{black2020fliptest}, the framework can easily be generalized to address equality of opportunity. Simply replace $\mu_0$ and $\mu_1$ by respectively $\mathcal{L}(X|S=0,Y=1)$ and $\mathcal{L}(X|S=1,Y=1)$.

\subsubsection{Convergence}

The first step for implementing the Flip Test technique is computing an estimator $T_{n_0,n_1}$ of the chosen matching function $T$. In theory, the matching is not limited to an optimal transport map, but must define an intuitively justifiable notion of counterpart.
\begin{defn}\label{admissibility}
Let $T : \R^d \to \R^d$ satisfy $T_\sharp \mu_0 = \mu_1$, and $T_{n_0,n_1}$ be an estimator of $T$ built on a $n_0$-sample from $\mu_0$ and a $n_1$-sample from $\mu_1$. $T_{n_0,n_1}$ is said to be $T$-admissible if
\begin{enumerate}
    \item $T_{n_0,n_1} : \X \to \X$ is continuous on $\X_0$,
    \item $T_{n_0,n_1}(x) \xrightarrow[n_0,n_1 \to +\infty]{a.s.} T(x)$ for $\mu_0$-almost every $x$.
\end{enumerate}
\end{defn}

According to Theorem \ref{convergences}, the smooth interpolation $\overline{T}_n$ is an admissible estimator of the optimal transport map $T$ under mild assumptions.

The second step consists in building empirical versions of the Flip Sets and Transparency Reports for $h$ and $T_{n_0,n_1}$ using $m$ data points from $\mu_0$. The consistency problem at hand becomes two-fold: w.r.t. to $m$ the size of the sample, and w.r.t. to the convergence of the estimator $T_{n_0,n_1}$. Proving this consistency is crucial, as $T_{n_0,n_1}$ satisfies the push-forward condition at the limit only.

Consider a $m$-sample $\{x^0_i\}^m_{i=1}$ drawn from $\mu_0$. We define the empirical counterparts of respectively the negative Flip Set, the positive Flip Set, the mean difference vector, the mean sign vector, and the Reference Vectors for arbitrary $h$ and $T$. For any $\star \in \{-,+\}$, they are given by

\begin{align*}
    F^\star_m(h,T) &:= \{x^0_i\}^m_{i=1} \cap F^\star(h,T),\\
    \Delta^\star_{\text{diff},m}(h,T) &:= \frac{\sum^m_{i=1} \mathbf{1}_{F^\star(h,T)}(x^0_i) \big(x^0_i-T(x^0_i)\big)}{|F^\star_m(h,T)|} ,\\
    \Delta^\star_{\text{sign},m}(h,T) &:= \frac{\sum^m_{i=1} \mathbf{1}_{F^\star(h,T)}(x^0_i) \text{sign}\big(x^0_i-T(x^0_i)\big)}{|F^\star_m(h,T)|} ,\\
    \Delta^\text{ref}_{\text{diff},m}(T) &:= \frac{1}{m} \sum^m_{i=1} \big(x^0_i-T(x^0_i)\big),\\
    \Delta^\text{ref}_{\text{sign},m}(T) &:= \frac{1}{m} \sum^m_{i=1} \text{sign}\big(x^0_i-T(x^0_i)\big).
\end{align*}

Note that the first four equalities correspond to the original definitions from \cite{black2020fliptest}. The strong law of large numbers implies the convergence almost surely of each of these estimators, as precised in the following proposition.

\begin{prop}\label{firstconvergence} Let $\star \in \{-,+\}$, $h$ be a binary classifier, and $T$ a measurable function. The following convergences hold

\begin{align*}
    \frac{|F^\star_m(h,T)|}{m} &\xrightarrow[m \to +\infty]{\mu_0-a.s.} \mu_0(F^\star(h,T)),\\
    \Delta^\star_{\text{diff},m}(h,T) &\xrightarrow[m \to +\infty]{\mu_0-a.s.} \Delta^\star_{\text{diff}}(h,T),\\
    \Delta^\star_{\text{sign},m}(h,T) &\xrightarrow[m \to +\infty]{\mu_0-a.s.} \Delta^\star_{\text{sign}}(h,T),\\
    \Delta^\text{ref}_{\text{diff},m}(T) &\xrightarrow[m \to +\infty]{\mu_0-a.s.} \Delta^\text{ref}_{\text{diff}}(T),\\
    \Delta^\text{ref}_{\text{sign},m}(T) &\xrightarrow[m \to +\infty]{\mu_0-a.s.} \Delta^\text{ref}_{\text{sign}}(T).
\end{align*}

\end{prop}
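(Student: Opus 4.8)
The plan is to obtain each of the five convergences as a direct application of the strong law of large numbers (SLLN) to a suitable i.i.d.\ sequence extracted from $\{x^0_i\}_{i=1}^m$, which are i.i.d.\ $\sim \mu_0$, after two short preliminary checks: that the Flip Sets are measurable, and that the vectors $x^0_i - T(x^0_i)$ are $\mu_0$-integrable.

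\textbf{Measurability and the first claim.} Since $h:\R^d\times\{0,1\}\to\{0,1\}$ is deterministic and $T$ is measurable, the map $x\mapsto\big(h(x,0),h(T(x),1)\big)$ is measurable, hence $F^\star(h,T)$ is a Borel set for $\star\in\{-,+\}$. Therefore $\mathbf{1}_{F^\star(h,T)}(x^0_i)$ are i.i.d.\ Bernoulli variables of mean $\mu_0(F^\star(h,T))$, and the SLLN gives $\frac{1}{m}|F^\star_m(h,T)|=\frac{1}{m}\sum_{i=1}^m\mathbf{1}_{F^\star(h,T)}(x^0_i)\to\mu_0(F^\star(h,T))$ almost surely, which is the first assertion.

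\textbf{Integrability and the Reference Vectors.} Set $Z_i:=x^0_i-T(x^0_i)$. Then $\norm{Z_i}\le\norm{x^0_i}+\norm{T(x^0_i)}$, and by the standing assumptions that $\mu_0$ and $T_\sharp\mu_0$ have finite first moments, $\E_{\mu_0}\norm{X}<\infty$ and $\E_{\mu_0}\norm{T(X)}=\int\norm{y}\,d(T_\sharp\mu_0)(y)<\infty$; hence $Z_i$ is an integrable $\R^d$-valued random vector, and so is $\mathbf{1}_{F^\star}(x^0_i)\,Z_i$, while $\text{sign}(Z_i)$ and $\mathbf{1}_{F^\star}(x^0_i)\,\text{sign}(Z_i)$ are bounded, hence integrable. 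Applying the SLLN coordinatewise (an integrable $\R^d$-vector has integrable coordinates), we get, almost surely,
\begin{align*}
\frac1m\sum_{i=1}^m Z_i&\longrightarrow\ \E_{\mu_0}[X-T(X)]=\Delta^\text{ref}_{\text{diff}}(T),\\
\frac1m\sum_{i=1}^m \text{sign}(Z_i)&\longrightarrow\ \Delta^\text{ref}_{\text{sign}}(T),\\
\frac1m\sum_{i=1}^m \mathbf{1}_{F^\star(h,T)}(x^0_i)\,Z_i&\longrightarrow\ \mu_0(F^\star(h,T))\,\Delta^\star_{\text{diff}}(h,T),\\
\frac1m\sum_{i=1}^m \mathbf{1}_{F^\star(h,T)}(x^0_i)\,\text{sign}(Z_i)&\longrightarrow\ \mu_0(F^\star(h,T))\,\Delta^\star_{\text{sign}}(h,T).
\end{align*}
The first two lines are exactly the fourth and fifth convergences.

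\textbf{The Transparency Reports by taking quotients.} The estimators $\Delta^\star_{\text{diff},m}(h,T)$ and $\Delta^\star_{\text{sign},m}(h,T)$ are the ratios of the numerators appearing in the last two lines of the display over $\frac1m|F^\star_m(h,T)|$. Since the population objects $\Delta^\star_{\text{diff}}(h,T)$ and $\Delta^\star_{\text{sign}}(h,T)$ are defined only when $\mu_0(F^\star(h,T))>0$, we work under that (implicit) hypothesis; then the first claim gives $\frac1m|F^\star_m(h,T)|\to\mu_0(F^\star(h,T))>0$ almost surely, so $|F^\star_m(h,T)|\neq 0$ for all large $m$ and the empirical ratios are eventually well-defined. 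Intersecting the finitely many almost-sure events above and passing to the quotient by the algebra of almost-sure limits yields $\Delta^\star_{\text{diff},m}(h,T)\to\Delta^\star_{\text{diff}}(h,T)$ and $\Delta^\star_{\text{sign},m}(h,T)\to\Delta^\star_{\text{sign}}(h,T)$ almost surely, which are the remaining two assertions.

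\textbf{Main obstacle.} There is no genuine difficulty in this argument; the only points deserving attention are (i) the $L^1$-integrability of $X-T(X)$ under $\mu_0$, for which the assumption that $T_\sharp\mu_0$ has a finite first moment is precisely what is needed (and is part of what makes the Transparency Report well-posed), and (ii) the quotient structure of the Transparency-Report estimators, which forces the nondegeneracy condition $\mu_0(F^\star(h,T))>0$ and requires combining the numerator and denominator limits on a single common almost-sure event.
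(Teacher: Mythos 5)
Your argument is correct and follows essentially the same route as the paper's proof: apply the strong law of large numbers to the indicator averages and to the numerator averages, then pass to the quotient for the Transparency Reports. The paper's version is terser; you merely make explicit the measurability of $F^\star(h,T)$, the $L^1$-integrability of $X-T(X)$ via the finite-first-moment hypotheses, and the implicit nondegeneracy condition $\mu_0(F^\star(h,T))>0$, all of which the paper leaves tacit.
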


In particular, theses convergences hold for an admissible estimator $T_{n_0,n_1}$. To address the further convergence w.r.t. $n_0$ and $n_1$, we first introduce a new definition.

\begin{defn}

A binary classifier $\tilde{h} : \R^d \to \{0,1\}$ is separating with respect to a measure $\nu$ on $\R^d$ if

\begin{enumerate}
    \item $H_0 := \tilde{h}^{-1}(\{0\})$ and $H_1 := \tilde{h}^{-1}(\{1\})$ are closed or open,
    \item $\nu \big(\overline{H_0} \cap \overline{H_1}\big)=0$.
\end{enumerate}

\end{defn}

We argue that except in pathological cases that are not relevant in practice, machine learning always deals with such classifiers. For example, thresholded versions of continuous functions, which account for most of the machine learning classifiers (e.g. SVM, neural networks\ldots), are separating with respect to Lebesgue continuous measures. As for a very theoretical example of non-separating classifier, one could propose the indicator of the rational numbers, which is not separating with respect to the Lebesgue measure. Working with classifiers $h$ such that $h(\cdot,1)$ is separating w.r.t. to $\mu_1$ fixes the regularity issues one might encounter when taking the limit in $h(T_{n_0,n_1}(\cdot),1)$. More precisely, it ensures that the set of discontinuity points of $h$ is $\mu_1$-negligible. As $T_\sharp \mu_0=\mu_1$ and since $T_{n_0,n_1}\rightarrow T$ $\mu_0$-almost everywhere, the following continuous mapping result holds:
\begin{prop}\label{classifconvergence}
Let $\tilde{h} : \R^d \to \{0,1\}$ be a separating classifier w.r.t. $\mu_1$, and $T_{n_0,n_1}$ a $T$-admissible estimator. Then, for $\mu_0$-almost every $x$
$$
    \tilde{h}(T_{n_0,n_1}(x)) \xrightarrow[n_0,n_1 \to +\infty]{a.s.} \tilde{h}(T(x)).
$$
\end{prop}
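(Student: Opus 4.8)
The plan is to reduce the statement to the almost-sure pointwise convergence $T_{n_0,n_1}(x) \to T(x)$ supplied by $T$-admissibility, combined with a continuous-mapping-type argument using the separating hypothesis on $\tilde h$. The key observation is that $\tilde h(T(x))$ need only be shown to equal $\lim_n \tilde h(T_{n_0,n_1}(x))$ at $\mu_0$-almost every $x$, so it suffices to exhibit a $\mu_0$-null set outside of which the implication ``$y_n \to T(x)$'' $\Rightarrow$ ``$\tilde h(y_n) \to \tilde h(T(x))$'' holds.

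First I would identify where this implication can fail: precisely at points $y$ where $\tilde h$ is discontinuous, i.e.\ $y$ that are limits of points from both $H_0$ and $H_1$. Since $H_0$ and $H_1$ partition $\R^d$ and each is closed or open, the set of discontinuity points of $\tilde h$ is contained in $\overline{H_0} \cap \overline{H_1}$ (indeed if, say, $H_0$ is open and $H_1$ closed, then $\tilde h$ is continuous at every interior point of $H_1$ and at every point of $H_0$, leaving only $\partial H_1 \subset \overline{H_0}\cap\overline{H_1}$; the mixed cases are analogous). By the separating assumption, $\mu_1(\overline{H_0}\cap\overline{H_1}) = 0$. Now use $T_\sharp \mu_0 = \mu_1$: the set $N := T^{-1}(\overline{H_0}\cap\overline{H_1})$ satisfies $\mu_0(N) = \mu_1(\overline{H_0}\cap\overline{H_1}) = 0$. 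Hence for $\mu_0$-almost every $x$, the point $T(x)$ is a continuity point of $\tilde h$.

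Next, fix such an $x$ — lying outside both $N$ and the $\mu_0$-null set on which $T_{n_0,n_1}(x) \not\to T(x)$ — and work on the probability-one event where $T_{n_0,n_1}(x) \to T(x)$. Since $\tilde h$ is continuous at $T(x)$ and takes values in the discrete set $\{0,1\}$, continuity at $T(x)$ means $\tilde h$ is \emph{locally constant} near $T(x)$: there is a neighborhood $U$ of $T(x)$ with $\tilde h \equiv \tilde h(T(x))$ on $U$. As $T_{n_0,n_1}(x) \to T(x)$, eventually $T_{n_0,n_1}(x) \in U$, so $\tilde h(T_{n_0,n_1}(x)) = \tilde h(T(x))$ for all large $n_0,n_1$, which is the desired convergence. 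A mild bookkeeping point is the joint limit in $(n_0,n_1)$: admissibility already phrases its conclusion as a joint almost-sure limit, so the same joint formulation carries through verbatim, and the intersection of the relevant probability-one events is still probability one.

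I do not anticipate a serious obstacle here; the only step requiring care is the first one — correctly arguing that the discontinuity set of $\tilde h$ sits inside $\overline{H_0}\cap\overline{H_1}$ by checking the ``closed or open'' cases — after which the push-forward identity transfers the null set to the $\mu_0$ side and the discreteness of $\{0,1\}$ makes the continuous-mapping step immediate. One should also state explicitly that the countable intersection of full-measure events (the admissibility event and the complement of $N$) remains full-measure, which is routine.
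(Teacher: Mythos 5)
Your proof is correct and follows essentially the same route as the paper's: both hinge on the fact that the set $\overline{H_0}\cap\overline{H_1}$ is $\mu_1$-null, transfer this to a $\mu_0$-null set via $T_\sharp\mu_0=\mu_1$, and then invoke the pointwise convergence $T_{n_0,n_1}(x)\to T(x)$. Your framing via ``$T(x)$ is a continuity point of $\tilde h$, hence $\tilde h$ is locally constant there'' is a slightly cleaner packaging of the paper's case analysis on whether $T_{n_0,n_1}(x)$ eventually lands in $H_0$ or $H_1$, and in particular it handles the possibility of the sequence oscillating between the two sets more transparently than the paper's closing remark does.
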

Next, we make a technical assumption for the convergence of the Transparency Report. Let $\{e_1,\ldots,e_d\}$ be the canonical basis of $\R^d$, and define for every $k \in \{1,\ldots,d\}$ the set $\Lambda_k(T) := \{x \in \R^d \ |\ \langle x - T(x), e_k \rangle = 0 \}$.

\begin{hyp}\label{axis}
For every $k \in \{1,\ldots,d\}$, $\mu_0\big(\Lambda_k(T)\big) = 0$.
\end{hyp}

Any Lebesgue continuous measure satisfies Assumption \ref{axis}. This is crucial for the convergence of the mean sign vector, as it ensures that the points of discontinuity of $x \mapsto \text{sign}\big(x-T(x)\big)$ are negligible. We now turn to our main consistency result.

\begin{thm}\label{ftconsistency}
Let $\star \in \{-,+\}$, $h$ be  a binary classifier such that $h(\cdot,1)$ is separating w.r.t. $\mu_1$, and $T_{n_0,n_1}$ a $T$-admissible estimator. The following convergences hold
\begin{align*}
    \mu_0(F^\star(h,T_{n_0,n_1})) &\xrightarrow[n_0,n_1 \to +\infty]{a.s.} \mu_0(F^\star(h,T)),\\
    \Delta^\star_{\text{diff}}(h,T_{n_0,n_1}) &\xrightarrow[n_0,n_1 \to +\infty]{a.s.} \Delta^\star_{\text{diff}}(h,T),\\
    \Delta^\text{ref}_{\text{diff}}(T_{n_0,n_1}) &\xrightarrow[n_0,n_1 \to +\infty]{a.s.} \Delta^\text{ref}_{\text{diff}}(T).
\end{align*}
If Assumption \ref{axis} holds, then additionally 
\begin{align*}
    \Delta^\star_{\text{sign}}(h,T_{n_0,n_1}) &\xrightarrow[n_0,n_1 \to +\infty]{a.s.} \Delta^\star_{\text{sign}}(h,T),\\
    \Delta^\text{ref}_{\text{sign}}(T_{n_0,n_1}) &\xrightarrow[n_0,n_1 \to +\infty]{a.s.} \Delta^\text{ref}_{\text{sign}}(T).
\end{align*}
\end{thm}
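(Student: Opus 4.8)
The plan is to reduce each of the five convergences to a pointwise, $\mu_0$-almost-everywhere convergence of the relevant integrand, and then conclude by a dominated-convergence argument; no law-of-large-numbers input is needed, since Theorem~\ref{ftconsistency} concerns only the population functionals indexed by $(n_0,n_1)$ (the $m$-asymptotics being the content of Proposition~\ref{firstconvergence}).

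First I would handle the FlipSet masses. For $\mu_0$-a.e.\ $x$ one has $\mathbf{1}_{F^+(h,T)}(x)=h(x,0)\,\bigl(1-h(T(x),1)\bigr)$ and $\mathbf{1}_{F^-(h,T)}(x)=\bigl(1-h(x,0)\bigr)\,h(T(x),1)$, and the same identities hold with $T_{n_0,n_1}$ in place of $T$; only the factor containing $h(\cdot,1)$ depends on the estimator. Applying Proposition~\ref{classifconvergence} to $\tilde h:=h(\cdot,1)$, which is separating w.r.t.\ $\mu_1$ by hypothesis, gives $h(T_{n_0,n_1}(x),1)\to h(T(x),1)$ for $\mu_0$-a.e.\ $x$, a.s.; since these quantities are $\{0,1\}$-valued this is eventual equality, so $\mathbf{1}_{F^\star(h,T_{n_0,n_1})}(x)\to\mathbf{1}_{F^\star(h,T)}(x)$ for $\mu_0$-a.e.\ $x$, a.s. As $\mu_0$ is a probability measure the constant $1$ dominates, and dominated convergence yields $\mu_0(F^\star(h,T_{n_0,n_1}))\to\mu_0(F^\star(h,T))$.

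Next, the difference vectors. For the reference vector I would write $\Delta^{\text{ref}}_{\text{diff}}(T_{n_0,n_1})=\int\bigl(x-T_{n_0,n_1}(x)\bigr)\,d\mu_0(x)$; $T$-admissibility gives $T_{n_0,n_1}(x)\to T(x)$ for $\mu_0$-a.e.\ $x$, a.s., so the integrand converges pointwise to $x-T(x)$. For $\Delta^\star_{\text{diff}}$ I would instead work with the product $\mathbf{1}_{F^\star(h,T_{n_0,n_1})}(x)\bigl(x-T_{n_0,n_1}(x)\bigr)$, whose $\mu_0$-a.e.\ limit is $\mathbf{1}_{F^\star(h,T)}(x)\bigl(x-T(x)\bigr)$ by the indicator convergence of the previous paragraph together with admissibility, pass to the limit in this numerator, and handle the denominator via the FlipSet-mass convergence $\mu_0(F^\star(h,T_{n_0,n_1}))\to\mu_0(F^\star(h,T))$ already obtained (positive whenever $\Delta^\star_{\text{diff}}$ is defined), concluding by continuity of division. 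The one non-routine point is the domination needed to exchange limit and integral for these unbounded integrands: $\norm{x-T_{n_0,n_1}(x)}\le\norm{x}+\norm{T_{n_0,n_1}(x)}$ must be bounded by a $\mu_0$-integrable function uniformly in $(n_0,n_1)$. This is immediate for the interpolation $\overline{T}_n$ when $\X_1$ is bounded, since the fourth property listed in Section~\ref{interpolation} forces $\overline{T}_n(x)$ into the convex hull of the $\mu_1$-sample, which then lies in a fixed bounded set; in general one assumes, or verifies, uniform integrability of $\{T_{n_0,n_1}\}$ with respect to $\mu_0$ and invokes Vitali's theorem. I expect this integrability bookkeeping, rather than the convergence itself, to be the main obstacle for the difference vectors.

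Finally, the sign vectors, which require Assumption~\ref{axis}. Here the integrand $x\mapsto\text{sign}\bigl(x-T(x)\bigr)$ is bounded (each coordinate lies in $\{-1,0,1\}$), so no integrability question arises; the delicate point is its discontinuity set. The scalar map $t\mapsto\text{sign}(t)$ is continuous at every $t\neq0$, hence $\text{sign}(\cdot)$ on $\R^d$ is continuous at every $y$ all of whose coordinates are nonzero. Therefore, at any $x$ with $T_{n_0,n_1}(x)\to T(x)$ and $x\notin\bigcup_{k=1}^d\Lambda_k(T)$, one has $\text{sign}\bigl(x-T_{n_0,n_1}(x)\bigr)\to\text{sign}\bigl(x-T(x)\bigr)$; the first condition holds $\mu_0$-a.e., a.s., by admissibility, and the second holds $\mu_0$-a.e.\ precisely because Assumption~\ref{axis} gives $\mu_0(\Lambda_k(T))=0$ for every $k$. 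Combining this with the FlipSet indicator convergence established above, dominated convergence (with dominating constant $\sqrt d$) yields $\Delta^\star_{\text{sign}}(h,T_{n_0,n_1})\to\Delta^\star_{\text{sign}}(h,T)$ and $\Delta^{\text{ref}}_{\text{sign}}(T_{n_0,n_1})\to\Delta^{\text{ref}}_{\text{sign}}(T)$, the normalizing denominator in the $\Delta^\star$ case again being controlled by the FlipSet-mass convergence.
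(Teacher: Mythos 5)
Your proof follows essentially the same route as the paper's: express the Flip Set indicator as a product $\bigl(1-h(\cdot,0)\bigr)\,h(T_{n_0,n_1}(\cdot),1)$ of $\{0,1\}$-valued factors, invoke Proposition~\ref{classifconvergence} for $\mu_0$-a.e.\ pointwise convergence of the estimator-dependent factor, then pass to the limit in all five integrals via dominated convergence, treating the ratio defining $\Delta^\star$ as a separately converging numerator and denominator. The decomposition, the use of Proposition~\ref{classifconvergence}, and the role of Assumption~\ref{axis} for the sign map's discontinuity set are exactly the paper's.

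The one place you go beyond the paper is also the one place the paper is vulnerable, and you are right to flag it. For $\Delta^\star_{\text{diff}}$ and $\Delta^{\text{ref}}_{\text{diff}}$ the paper writes ``this enables to apply the dominated convergence theorem'' without exhibiting a dominating function for the sequence $x\mapsto x-T_{n_0,n_1}(x)$, which is not uniformly bounded in general. You correctly observe that some uniform-in-$(n_0,n_1)$ integrability input is needed, and that it comes for free for the interpolation $\overline{T}_n$ precisely because property~4 of Section~\ref{interpolation} confines its range to $\conv\{x^1_1,\ldots,x^1_n\}\subset\X_1$, so $\norm{x-\overline{T}_n(x)}\le\norm{x}+\sup_{y\in\X_1}\norm{y}$ is a $\mu_0$-integrable dominator when $\X_1$ is bounded and $\mu_0$ has a finite first moment. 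Your fallback of assuming uniform integrability and invoking Vitali's theorem is the honest way to state the hypothesis the paper is tacitly using. One could argue the paper's Definition~\ref{admissibility}, which posits $T_{n_0,n_1}:\X\to\X$, is meant to encode a bounded range, but this is not spelled out, so your explicit treatment is an improvement rather than a deviation. The remainder of your argument, including the $\sqrt d$ dominator for the sign vectors and the observation that eventual equality of $\{0,1\}$-valued sequences follows from convergence, is correct and matches the paper's reasoning.
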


As $h$ is binary, the probability of the negative Flip Set can be written as $\mu_0(F^-(h,T_{n_0,n_1})) = \int [1-h(x,0)]h(T_{n_0,n_1}(x),1) d\mu_0(x).$ Note that the integrated function $[1-h(\cdot,0)]h(T_{n_0,n_1}(\cdot),1)$ is dominated by the constant $1$. Then, it follows from Proposition \ref{classifconvergence} that this sequence of functions converges $\mu_0$-almost everywhere to $[1-h(\cdot,0)]h(T(\cdot),1)$ when $n_0,n_1 \to +\infty$. By the dominated convergence theorem, we conclude that $\mu_0(F^-(h,T_{n_0,n_1})) \xrightarrow[n_0,n_1 \to +\infty]{}\mu_0(F^-(h,T))$. The same argument holds for the positive Flip Sets. The proofs of the other convergences follow the same reasoning, using Proposition \ref{classifconvergence} and Assumption \ref{axis} to apply the dominated convergence theorem.\\

As aforementioned, the assumptions of Theorem \ref{ftconsistency} are not significantly restrictive in practice. Thus, the Flip Test framework is tailored for implementations.

\section{Conclusion}

We addressed the problem of constructing a statistically approximation of the continuous optimal transport map. We argued that this has strong consequences for machine learning applications based on OT, as it renders possible to generalize discrete optimal transport on new observations while preserving its key properties. We illustrated that using the proposed extension ensures the statistical consistency of OT-based frameworks, and as such derived the first consistency analysis for observation-based counterfactual explanations.

% Acknowledgments---Will not appear in anonymized version

\bibliographystyle{plainnat}
\bibliography{references}

\appendix

\section{Proofs of Section \ref{learning}}

\subsection{Intermediary result}

We first introduce a proposition adapted from \cite{delbarrio2020centeroutward} to suit our setting. In what follows, we denote by $N_C(x) := \{y \in \R^d |\ \forall x' \in C,\ \langle y,x'-x\rangle \leq 0\}$ the \emph{normal cone} at $x$ of the convex set $C$. 

\begin{prop}\label{limit}
Suppose that $\mathcal{X}_1$ is a compact convex set. Let $x_n = \lambda_n u_n \in \R^d$ where $0 < \lambda_n \to +\infty$ and $u_n \in \partial \mathcal{X}_1 \to u$ as $n \to +\infty$. Note that by compactness of the boundary, necessarily $u \in \partial \mathcal{X}_1$. If $(T(x_n))_{n \in \N}$ has a limit $v$ (taking a subsequence if necessary), then $
v \in \partial \mathcal{X}_1,
$
and 
$
    u \in N_{\mathcal{X}_1}(v) \neq \{0\}.    
$
\end{prop}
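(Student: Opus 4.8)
The plan is to use monotonicity of $T$ along the rays through the points $u_n$. First I would record the structural facts the construction provides: $T=\nabla\varphi$ for a convex potential $\varphi$ on $\R^d$, so $T$ is a monotone operator, $\langle T(a)-T(b),a-b\rangle\ge 0$ for all $a,b$ in its domain; and, $\mathcal{X}_1$ being compact convex, $T$ takes all its values in $\mathcal{X}_1$, so that $v=\lim_n T(x_n)$ lies in $\mathcal{X}_1$ (the limit exists along a subsequence, and $\mathcal{X}_1$ is closed). I would also note that $\mathrm{range}(T)$ is dense in $\mathcal{X}_1=\mathrm{supp}\,\mu_1$: if some open ball $B$ avoided $\mathrm{range}(T)$ then $T^{-1}(B)=\emptyset$, hence $\mu_1(B)=\mu_0\big(T^{-1}(B)\big)=0$ (as $T_\sharp\mu_0=\mu_1$), contradicting $B\cap\mathrm{supp}\,\mu_1\ne\emptyset$.

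The core of the argument is a single limit. Fix $y$ in the domain of $T$ and set $w:=T(y)\in\mathcal{X}_1$. Monotonicity applied to the pair $x_n=\lambda_n u_n$ and $y$ gives $\langle T(x_n)-w,\ \lambda_n u_n-y\rangle\ge 0$, that is,
\[
  \langle T(x_n)-w,\ u_n\rangle\ \ge\ \frac{1}{\lambda_n}\,\langle T(x_n)-w,\ y\rangle .
\]
As $n\to\infty$ the left side converges to $\langle v-w,u\rangle$ (because $T(x_n)\to v$ and $u_n\to u$), while the right side tends to $0$, since $\|T(x_n)-w\|\le\mathrm{diam}(\mathcal{X}_1)$ and $\lambda_n\to+\infty$. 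Hence $\langle u,\,w-v\rangle\le 0$ for every $w\in\mathrm{range}(T)$. Because $\{w:\langle u,w-v\rangle\le 0\}$ is a closed half-space and $\mathrm{range}(T)$ is dense in $\mathcal{X}_1$, this inequality holds for every $w\in\mathcal{X}_1$, which is precisely $u\in N_{\mathcal{X}_1}(v)$.

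It remains to see $N_{\mathcal{X}_1}(v)\ne\{0\}$ and $v\in\partial\mathcal{X}_1$. The parametrization $x_n=\lambda_n u_n$ with $u_n\in\partial\mathcal{X}_1$ is used in the regime $0\in\mathring{\mathcal{X}_1}$, so $\partial\mathcal{X}_1$ is compact and does not contain the origin; as already noted $u\in\partial\mathcal{X}_1$, so $u\ne 0$. Thus $N_{\mathcal{X}_1}(v)$ contains the nonzero vector $u$, whence $N_{\mathcal{X}_1}(v)\ne\{0\}$; and since the normal cone of a convex body is trivial at every interior point, $v\notin\mathring{\mathcal{X}_1}$. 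Combined with $v\in\mathcal{X}_1$, this gives $v\in\partial\mathcal{X}_1$.

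I expect the only genuine subtlety to be the preliminary bookkeeping rather than any estimate: one must be sure that the limit map $T$ — a priori the Brenier map, defined merely $\mu_0$-almost everywhere on $\mathcal{X}_0$ — is actually the gradient of a globally defined convex function whose range is contained in, and dense in, $\mathcal{X}_1$. This is exactly the information carried through the convergence $\overline{T}_n=\nabla\varphi_n\to\nabla\varphi=T$ behind Theorem \ref{convergences}, together with $T_\sharp\mu_0=\mu_1$; once that is in place, the proposition reduces to the short monotonicity computation above.
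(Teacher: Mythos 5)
Your proof is correct and follows essentially the same monotonicity‑plus‑density argument as the paper: you evaluate monotonicity at $x_n$ and a fixed domain point $y$ (equivalently, $T^{-1}(w)$), divide by $\lambda_n$, pass to the limit using the compactness bound $\|T(x_n)-w\|\le\mathrm{diam}(\mathcal{X}_1)$, and then upgrade the half‑space inequality from $\mathrm{range}(T)$ to all of $\mathcal{X}_1$ by density. The only difference is cosmetic and in the bookkeeping: you establish $u\ne 0$ first (making explicit the hidden assumption $0\in\mathring{\mathcal{X}}_1$, which the paper also needs for its supporting‑hyperplane step but leaves tacit) and deduce $v\in\partial\mathcal{X}_1$ from the nontrivial normal cone, whereas the paper derives $v\in\partial\mathcal{X}_1$ via the supporting hyperplane and then notes the normal cone is nontrivial.
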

\begin{proof}
Taking subsequences if necessary, we can assume that $T(x_n) \to v$ for some $v \in \mathcal{X}_1$. The monotonicity of $T$ implies that for any $x \in \R^d$,
$
\langle x_n-x , T(x_n)-T(x) \rangle \geq 0.
$
In particular, for any $w \in T(\R^d)$,
$
\langle x_n-T^{-1}(w) , T(x_n)-w \rangle \geq 0.
$
This can be written as
$
\langle u_n-\frac{1}{\lambda_n} T^{-1}(w) , T(x_n)-w \rangle \geq 0.
$
Taking the limit leads to
$
\langle u , v-w \rangle \geq 0.
$
Define $H := \{w \in \R^d \ |\ \langle u , w-v \rangle \leq 0\}$ which is a closed half-space. As $T$ pushes $\mu_0$ towards $\mu_1$, $T(\R^d)$ contains a dense subset of $\mathcal{X}_1$. Since $H$ is closed, this implies that $\mathcal{X}_1 \subset H$ and $v\in \X_1\cap H$. Consequently, $H$ is a supporting hyperplane of $\X_1$ and $v\in \partial\X_1$. Now, write the inclusion $\X_1 \subset H$ as $\forall{w \in \mathcal{X}_1},\ \langle u , w-v \rangle \leq 0$. Denote by $N_{\mathcal{X}_1}(x)$ the normal cone of $\mathcal{X}_1$ at an arbitrary point $x \in \R^d$. Conclude by noting that the above inequality reads $u \in N_{\mathcal{X}_1}(v)$. The cone does not narrow down to $\{0\}$ as $v$ does not belong to $\mathring{\mathcal{X}_1}$.
\end{proof}

\subsection{Proof of Theorem \ref{convergences}}
We now turn to the proof of the main theorem.

\begin{proof}

Recall that $\mu_0$ and $\mu_1$ are probability measures on $\R^d$ with respective supports $\mathcal{X}_0$ and $\mathcal{X}_1$. We denote their interiors by $\mathring{\mathcal{X}_0}$ and $\mathring{\mathcal{X}_1}$, and their boundaries by $\partial \mathcal{X}_0$ and $\partial \mathcal{X}_1$. We assume the measures to be absolutely continuous with respect to the Lebesgue measure. Recall that there exists an unique map $T$ such that $T_\sharp \mu_0 = \mu_1$ and $T = \nabla \varphi$ $\mu_0$-almost everywhere for some convex function $\varphi$ called a \emph{potential}. We denote by $\text{dom}(\nabla \varphi)$ the set of differentiable points of $\varphi$ which satisfies $\mu_0(\text{dom}(\nabla \varphi))=1$, according to Theorem 25.5 in \cite{rockafellar1970convex}.

Conversely, there also exists a convex function $\psi$ such $S$, the Brenier's map from $\mu_1$ to $\mu_0$, can be written as $S := \nabla \psi$ $\mu_1$-almost everywhere. In addition, $S$ can be related to $T$ through the potential functions. Concretely, $\psi$ coincides with the convex conjugate 
$
   \varphi^*(y) = \sup_{x\in \R^d} \big\{ \langle x,y\rangle - \varphi(x) \big\}
$
of $\varphi$. We can then fix this function for $u \in \R^d \setminus \mathring{\mathcal{X}_1}$ using the lower semi-continuous extension on the support. This defines a specific $\varphi$ (hence a specific solution $T$) as

\begin{equation}\label{extension}
        \varphi(x) := \sup_{u \in \R^d} \big\{ \langle x,u \rangle - \varphi^*(u) \big\} = \sup_{u \in \mathcal{X}_1} \big\{ \langle x,u \rangle - \varphi^*(u) \big\}.
\end{equation}

Let $\{x^0_i\}^n_{i=1}$ and $\{x^1_i\}^n_{i=1}$ be $n$-samples drawn from respectively $\mu_0$ and $\mu_1$, defining empirical measures $\mu^n_0$ and $\mu^n_1$. Without loss of generality, assume that the samples are ordered such that $T_n : x^0_i \mapsto x^1_i$ is the unique solution to the corresponding discrete Monge problem. Consider the interpolation $\overline{T}_n$. We pay attention to the properties it satisfies:

\begin{enumerate}
    \item $\overline{T}_n = \nabla \varphi_n$ where $\varphi_n$ is continuously differentiable,
    \item $\overline{T}_n$ is cyclically monotone,
    \item for all ${i \in \{1,\ldots,n\}},\ \overline{T}_n(x^0_i) = x^1_i = T_n(x^0_i)$,
    \item for all ${x \in \R^d},\ \overline{T}_n(x) \in \text{conv}\big( \{x^1_1,\ldots,x^1_n\} \big)$.
\end{enumerate}
Following the decomposition of Theorem~\ref{convergences} the proof will be divided into three steps.
\paragraph{Step 1: Point-wise convergence.}

Assume that the support $\mathcal{X}_0$ is a convex set. Recall that $\overline{T}_n= \nabla \varphi_n$ everywhere and $T = \nabla \varphi$ $\mu_0$-almost everywhere. We prove the point-wise convergence of $\{\overline{T}_n\}_{n \in \N}$ to $T$ in two steps: first, we show the point-wise convergence of $\{\varphi_n\}_{n \in \N}$ to $\varphi$; second, we do the same for $\{\nabla \varphi_n\}_{n \in \N}$ to $\nabla \varphi$.

Theorem 5.19 in \cite{villani2008optimal} implies that 
$$\gamma^n = (I \times T_n)_\sharp \mu^n_0 \xrightarrow[n \to +\infty]{w} \gamma = (I \times T)_\sharp \mu_0,$$
where $w$ denotes the weak convergence of probability measures. 
It follows from Theorem 2.8 in \cite{del2019central} that for all $x \in \mathring{\mathcal{X}_0}$ the limit $\lim_{n \to +\infty} \varphi_n(x) = \varphi(x)$ holds after centering.

Theorem 25.5 in \cite{rockafellar1970convex} states that for all $x \in \text{dom}(\nabla \varphi)$ there exist an open convex subset $C$ such that $x \in C \subset \text{dom}(\nabla \varphi)$. Take an arbitrary $x \in \mathring{\mathcal{X}_0} \cap \text{dom}(\nabla \varphi)$ and consider such a subset $C$ containing $x$. Since $\varphi$ is finite and differentiable in $C$, we can apply Theorem 25.7 in \cite{rockafellar1970convex} to conclude that $\nabla \varphi(x) = \lim_{n \to +\infty} \nabla \varphi_n(x)$. To sum-up, the desired equality holds in $\mathring{\mathcal{X}_0} \cap \text{dom}(\nabla \varphi)$, in consequence $\mu_0$-almost surely (recall that the border of a convex set is Lebesgue negligible).

\paragraph{Step 2: Uniform convergence on the compact sets.}

Further assume that $\nabla \varphi$ is continuous on $\mathring{\mathcal{X}_0}$, and that the support $\mathcal{X}_1$ is a compact set. Set $K_1 = \sup_{x \in \mathcal{X}_1} ||x||$. This implies that for any ${x \in \R^d}$,
$||\overline{T}_n(x)|| \leq \max_{1 \leq i \leq n} ||x^1_i|| \leq K_1.$ Then  $||\nabla\varphi_n(x)||\leq K_1$ for all $n\in \N$ and $x\in\R^d$. In consequence the sequence $\{\varphi_n\}_{n\in\N}$ is equicountinous with the topology of convergence on the compact sets. Arzela-Ascoli's theorem applied on the compact sets of $\R^d$ implies that the sequence is s relatively compact in the topology induced by the uniform norm on the compact sets. Let $\rho$ be any cumulative point of $\{\varphi_n\}_{n\in\N}$. Then, there exists a sub-sequence of $\{\varphi_n\}_{n\in\N}$ converging to $\rho$.  Abusing notation, we keep denoting the sub-sequence by $\{\varphi_n\}_{n\in\N}$. The previous step implies that $\varphi=\rho$ and $\nabla \varphi =\nabla \rho$ on $\X_0$. Next, we show that this equality holds on $\R^d \setminus \X_0$.

The continuity of the transport map implies that $\mathring{\mathcal{X}_0}\subset\text{dom}(\nabla\varphi)$.
Hence, by convexity, for every $z \in \R^d$ and $u = \nabla \varphi(x) = \nabla \rho(x) \in \nabla \varphi (\mathring{\X_0})$, 
\begin{equation}\label{eq:ineq_fench}
\rho(z)\geq \rho(x)+ \langle u, z-x\rangle=\langle u,x \rangle - \varphi^*(u),
\end{equation}
where the equality comes from the equality case of the Fenchel-Young theorem. As $\mu_0(\mathring{\X_0})=1$, the push-forward condition $\nabla \varphi_\sharp \mu_0 = \mu_1$ implies that $\mu_1(\nabla \varphi(\mathring{X_0}))=1$ and consequently $\nabla \varphi(\mathring{X_0})$ is dense in $\X_1$. It follows that
\begin{align}\label{eq:inequality_main1}
   \rho(z) \geq \sup_{u \in  \nabla \varphi(\mathring{\X_0})} \big\{ \langle u,z\rangle - \varphi^*(u) \big\} = \sup_{u \in \mathcal{X}_1} \big\{ \langle x,u \rangle - \varphi^*(u) \big\} = \varphi(z)\ \text{for every $z \in \R^d$}. 
\end{align}
To get the upper bound, set $z \in \R^d$ and $u_n = \nabla \varphi_n(z) = \overline{T}_n(z)$. Since $\overline{T}_n(z) \in \text{conv}\big( \{x^1_1,\ldots,x^1_n\} \big)$, then $u_n \in \mathcal{X}_1$. Fenchel-Young equality once again implies that
$
    \langle x,u_n \rangle = \varphi_n(x) + \varphi^*_n(u_n).
$ This gives that 

$$
    \varphi_n(x) \leq \sup_{u \in \mathring{\X_1}} \big\{ \langle u,x \rangle - \varphi^*_n(u) \big\} = \tilde{\varphi}_n(x),
$$
where $\tilde{\varphi}_n$ is the Legendre transform of
$$
    \tilde{\varphi}^*_n : u \mapsto \begin{cases*}
      \varphi^*_n(u) & if $u \in \mathring{\X_1}$,\\
      +\infty        & otherwise.
    \end{cases*}
$$
Since $\nabla \varphi^*$ is the Brenier map from $\mu_1$ to $\mu_0$, then Theorem 2.8 in \cite{del2019central} implies that $\lim_{n \to +\infty} \varphi^*_n(u) = \varphi^*(u) = \lim_{n \to +\infty} \tilde{\varphi}^*_n(u)$ for every $u \in \mathring{\X_1}$. Outside $\mathring{\X_1}$ we have $\tilde{\varphi}^*_n(u) = +\infty = \varphi^*(u)$ by definition. Hence, the sequence $\{\tilde{\varphi}^*_n\}_{n\in\N}$ converges point-wise to $\varphi^*$ over $\R^d$. According to Theorem~7.17 in together with Theorem~11.34 in \cite{rockafellar2009variational} the same convergence holds for their conjugates. This means that for any $x \in \R^d$ we have $\lim_{n \to +\infty} \tilde{\varphi}_n(x) = \varphi(x)$. This leads to $\rho(x) \leq \varphi(x)$ for every $x \in \R^d$, hence $\rho = \varphi$. We conclude, using Theorem~25.7 in \cite{rockafellar1970convex}, that $\overline{T}_n = \nabla \varphi_n$ converges uniformly to $T = \nabla \varphi$ over compact sets of $\R^d$, in particular over $\mathcal{X}_0$ if it is compact.

\paragraph{Step 3: Uniform convergence on $\R^d$.}

Further assume that the support $\mathcal{X}_1$ is a strictly convex set. To prove the result it suffices to show that, for every $w \in \R^d$,

$$
    \sup_{x \in \R^d} | \langle \overline{T}_n(x)-T(x) , w \rangle | \xrightarrow[n \to +\infty]{} 0.
$$

Let's assume that on the contrary, there exist $\varepsilon >0$, $w \neq 0$, and $\{x_n\}_{n\in\N} \subset \R^d$ such that

\begin{equation}\label{contradicts}
    | \langle \overline{T}_n(x_n)-T(x_n) , w \rangle | > \varepsilon
\end{equation}

for all $n$. Necessarily, the sequence $\{x_n\}_{n\in\N}$ is unbounded. If not, we could extract a convergent subsequence so that, by using the point-wise convergence and the continuity of the transport functions, the left-term of \eqref{contradicts} would tend to zero. Taking subsequences if necessary, we can assume that $x_n = \lambda_n u_n$ where $\lim_{n \to +\infty} u_n = u$ where $u_n, u \in \partial \mathcal{X}_1$ and $0 < \lambda_n \to +\infty$. By compactness of $\mathcal{X}_1$ and Proposition \ref{limit}, $\overline{T}_n(x_n) \to z \in \mathcal{X}_1$ and $T(x_n) \to y \in \partial \mathcal{X}_1$. Let $\tau > 0$ so that by monotonicity

$$
\langle \overline{T}_n(x_n)-\overline{T}_n(\tau u_n) , (\lambda_n - \tau)u_n \rangle \geq 0.
$$

For $n$ large enough so that $\lambda_n > \tau$ we have

$$
    \langle \overline{T}_n(x_n)-T(\tau u_n) ,u_n \rangle + \langle T(\tau u_n)-\overline{T}_n(\tau u_n) ,u_n \rangle\geq 0.
$$

The second term tends to zero, leading to

$$
    \langle z-T(\tau u) ,u \rangle \geq 0.
$$

As this holds for any $\tau > 0$, we can take $\tau_n = \lambda_n \to +\infty$ to get
\begin{align}\label{eq:togetherwith}
    \langle z-y ,u \rangle \geq 0.
\end{align}

According to Proposition \ref{limit}, $u \in N_{\mathcal{X}_1}(y)$ with $u \neq 0$. In particular, as $z \in \mathcal{X}_1$, we have that $\langle u , z - y \rangle \leq 0$, which implies that $\langle u , z - y \rangle = 0$.
This means that $u \perp z - y$ and $ u\in N_{\mathcal{X}_1}(y)$. Hence, $z-y $ belongs to the tangent plane of $\partial \mathcal{X}_1$ at $y$ while $z \in \mathcal{X}_1$. Besides, $\mathcal{X}_1$ is strictly convex, implying that $z = y$. This contradicts at the limit with \eqref{contradicts}.

\end{proof}

\section{Proof of Section \ref{applications}}

Proof of Proposition \ref{gap}.

\begin{proof}

Note that

$$
    F^-(h,T) = \{h(x,0)=0 \text{ and } h(T(x),1)=1\}
             = \{h(T(x),1)=1\} - \{h(x,0)=1 \text{ and } h(T(x),1)=1\}.
$$
    
Similarly

$$
    F^+(h,T) = \{h(x,0)=1 \text{ and } h(T(x),1)=0\}
             = \{h(x,0)=1\} - \{h(x,0)=1 \text{ and } h(T(x),1)=1\}.
$$

Taking the measures we get
$$
\mu_0(F^-)-\mu_0(F^+)
= \mu_0(\{ x \in \R^d \ |\ h(T(x),1)=1 \}) - \mu_0(\{ x \in \R^d \ |\ h(x,0)=1 \}).
$$

Using the fact that $T_\sharp \mu_0 = \mu_1$ we have

$$
    \mu_0(\{ x \in \R^d \ |\ h(T(x),1)=1 \}) = \mu_1(\{ x \in \R^d \ |\ h(x,1)=1 \}).
$$

This leads to

$$
    \mu_0(F^-)-\mu_0(F^+) =  \mu_1(\{ x \in \R^d \ |\ h(x,1)=1 \}) - \mu_0(\{ x \in \R^d \ |\ h(x,0)=1 \}).
$$

Which concludes the proof.

\end{proof}

Proof of Proposition \ref{firstconvergence}.

\begin{proof}

Let $\star \in \{-,+\}$. The empirical probability of the Flip Set is

$$
    \frac{|F^\star_m(h,T)|}{m} = \frac{1}{m} \sum^m_{i=1} \mathbf{1}_{F^\star(h,T)}(x^0_i).
$$

By the strong law of large numbers,

$$
    \frac{1}{m} \sum^m_{i=1} \mathbf{1}_{F^\star(h,T)}(x^0_i) \xrightarrow[m \to +\infty]{\mu_0-a.s.}\ \E_{\mu_0}[\mathbf{1}_{F^\star(h,T)}(X)] = \mu_0(F^\star(h,T)).
$$

This concludes the first part of the proof. We now turn to the Transparency Report, and show the convergence of the mean difference vector, as the proof is equivalent for the mean sign vector. The empirical estimator can be written as

$$
    \Delta^\star_{\text{diff},m}(h,T) = \frac{m}{|F^\star_m(h,T)|} \times m^{-1} \sum^m_{i=1} \mathbf{1}_{F^\star(h,T)}(x^0_i) \big(x^0_i-T(x^0_i)\big).
$$

Then, by the strong law of large numbers we have

$$
    \frac{m}{|F^\star_m(h,T)|} \times m^{-1} \sum^m_{i=1} \mathbf{1}_{F^\star(h,T)}(x^0_i) \big(x^0_i-T(x^0_i)\big) \xrightarrow[m \to +\infty]{\mu_0-a.s.} \frac{1}{\mu_0(F^\star(h,T))}\int_{F^\star(h,T)} \big(x-T(x)\big) d\mu_0(x),
$$

where by definition

$$
    \frac{1}{\mu_0(F^\star(h,T))} \int_{F^\star(h,T)} \big(x-T(x)\big) d\mu_0(x) =  \Delta^\star_{\text{diff}}(h,T).
$$

The proof for the Reference Vectors is identical, even simpler as $h$ is not involved.

\end{proof}

Proof of Proposition \ref{classifconvergence}.

\begin{proof}

Throughout this proof, we work with a given realization $T_{n_0,n_1} := T^{(\omega)}_{n_0,n_1}$ of the random estimator for an unimportant arbitrary $\omega \in \Omega$. Without loss of generality, consider that $H_0$ is open and $H_1$ is closed. Recall that by $T$-admissibility, the sequence $T_{n_0,n_1}(x)$ converges for $\mu_0$-almost every $x$.  We aim at showing that for $\mu_0$-almost every $x$ 

$$
    \tilde{h}(T_{n_0,n_1}(x)) \xrightarrow[n_0,n_1 \to +\infty]{} \tilde{h}(T(x)).
$$

For any $x\in \X_0$ there are only two different cases.

\paragraph{Case 1:} For any $n_0$ and $n_1$ large enough, $T_{n_0,n_1}(x) \in H_1$. Then at the limit, $T(x) \in H_1$, meaning that the expected convergence holds for this $x$.

\paragraph{Case 2:} For any $n_0$ and $n_1$ large enough, $T_{n_0,n_1}(x) \in H_0$. Then at the limit, either $T(x) \in H_0$ or $T(x) \in H_1$. If $T(x) \in H_0$, the expected convergence holds for this $x$. If $T(x) \in H_1$, necessarily $T(x) \in \overline{H_0} \cap H_1$. As $\mu_1(\overline{H_0} \cap H_1) = 0$ and $\mu_1 = \mu_0 \circ T^{-1}$, this only occurs for $x$ in a $\mu_0$-negligible set.

Any other cases would contradict with the convergence of $T_{n_0,n_1}(x)$. Consequently, the expected convergence holds $\mu_0$-almost everywhere.

\end{proof}

Proof of Theorem \ref{ftconsistency}.

\begin{proof}

In this proof as well we work with a given realization $T_{n_0,n_1} := T^{(\omega)}_{n_0,n_1}$ of the random estimator for an unimportant arbitrary $\omega \in \Omega$. Let's show the result for $\star = -$ as the proof is equivalent for $\star = +$. Because $h$ is binary, the probability of the Flip Set can be written as

$$
    \mu_0(F^-(h,T_{n_0,n_1})) = \E_{\mu_0}[\mathbf{1}_{\{F^-(h,T_{n_0,n_1})\}}]
    = \int [1-h(x,0)]h(T_{n_0,n_1}(x),1) d\mu_0(x).
$$

Note that the integrated function $[1-h(\cdot,0)]h(T_{n_0,n_1}(\cdot),1)$ is dominated by the constant $1$. Then, it follows from Proposition \ref{classifconvergence} that this sequence of functions converges $\mu_0$-almost everywhere to $[1-h(\cdot,0)]h(T(\cdot),1)$ when $n_0,n_1 \to +\infty$. By the dominated convergence theorem, we conclude that

$$
    \mu_0(F^-(h,T_{n_0,n_1})) \xrightarrow[n_0,n_1 \to +\infty]{} \mu_0(F^-(h,T)).
$$

We now turn to the mean difference vector,

$$
    \Delta^-_{\text{diff}}(h,T_{n_0,n_1}) = \frac{1}{\mu_0(F^-(h,T_{n_0,n_1}))}\int_{F^-(h,T_{n_0,n_1})} \big(x-T_{n_0,n_1}(x)\big) d\mu_0(x).
$$

We already proved that the left fraction converges to $\mu_0(F^-(h,T))^{-1}$. To deal with the integral, we exploit once again the fact that $h$ is binary to write

$$
    \int_{F^-(h,T_{n_0,n_1})} \big(x-T_{n_0,n_1}(x)\big) d\mu_0(x) = \int [1-h(x,0)] h(T_{n_0,n_1}(x),1) \big(x-T_{n_0,n_1}(x)\big) d\mu_0(x).
$$

Note that the sequence of functions $x \mapsto x-T_{n_0,n_1}(x)$ converges $\mu_0$-almost everywhere to $x \mapsto x-T(x)$. This is where \ref{classifconvergence} comes into play to ensure the convergence $\mu_0$-everywhere of the integrated function. This enables to apply the dominated convergence theorem to conclude that

$$
     \Delta^-_{\text{diff}}(h,T_{n_0,n_1})  \xrightarrow[n_0,n_1 \to +\infty]{} \Delta^-_{\text{diff}}(h,T).
$$

We finally address the case of the mean sign vector:

$$
    \Delta^-_{\text{sign}}(h,T_{n_0,n_1}) = \int [1-h(x,0)] h(T_{n_0,n_1}(x),1) \text{sign}\big(x-T_{n_0,n_1}(x)\big) d\mu_0(x).
$$

The approach is the same as for the mean difference vector. The only crucial distinction to handle is the convergence of the sequence $x \mapsto \text{sign}\big(x-T_{n_0,n_1}(x)\big)$ to $x \mapsto \text{sign}\big(x-T(x)\big)$, which is not trivial as the sign function is discontinuous wherever a coordinate of its argument equals zero. We follow a similar reasoning as for the proof of Proposition \ref{classifconvergence} to show the convergence $\mu_0$-almost everywhere. The only pathological case happens when $x-T(x)$ ends up on a canonical axis, that is to say when $x \in \Lambda_k(T)$. If Assumption \ref{axis} holds, this occurs only for $x$ in a $\mu_0$-negligible set. Consequently, for $\mu_0$-almost every $x$
 
$$
    \text{sign}\big(x-T_{n_0,n_1}(x)\big) \xrightarrow[n_0,n_1 \to +\infty]{} \text{sign}\big(x-T(x)\big).
$$

To conclude, we apply Proposition \ref{classifconvergence} along with the dominated convergence theorem to obtain

$$
    \Delta^-_{\text{sign}}(h,T_{n_0,n_1}) \xrightarrow[n_0,n_1 \to +\infty]{} \Delta^-_{\text{sign}}(h,T).
$$

The proof for the Reference Vectors is identical, even simpler as $h$ is not involved.
\end{proof}

\end{document}